\newcommand{\Z}{\ensuremath{\mathbb Z}}
\newcommand{\R}{\ensuremath{\mathbb R}}
\renewcommand{\rho}{\varrho}
\renewcommand{\phi}{\varphi}
\newcommand{\dlim}{\displaystyle\lim} % limit
\newcommand{\dsum}{\displaystyle\sum} % sum
\newcommand{\vect}[2]{\left(\protect\begin{smallmatrix} #1 \\ #2 \end{smallmatrix}\protect\right)}
\DeclareMathOperator{\cl}{cl}
\DeclareMathOperator{\diam}{diam}
\DeclareMathOperator{\cone}{cone}
\DeclareMathOperator{\sym}{sym}
\DeclareMathOperator{\intr}{int}
\def\set#1{\left\{#1\right\}} %Set
\newtheorem{thm}{Theorem}[section]
\newtheorem{prop}[thm]{Proposition}
\begin{document}

\title[Highly symmetric fundamental domains]
{Highly symmetric fundamental domains for lattices in $\R^2$ and $\R^3$}  

\author{Joseph Ray Clarence G.~Damasco}
\address{Institute of Mathematics, College of Science, University of the Philippines Diliman, Quezon City 1101, Philippines}
\email{jrcgdamasco@math.upd.edu.ph}

\author{D. Frettl\"oh}
\address{Technische Fakult\"at, Universit\"at Bielefeld /
Fachbereich Mathematik und Informatik, FU Berlin}
\email{dfrettloeh@techfak.uni-bielefeld.de}
\urladdr{http://math.uni-bielefeld.de/~frettloe}

\author{Manuel Joseph C.~Loquias}
\address{Institute of Mathematics, College of Science, University of the Philippines Diliman, Quezon City 1101, Philippines}
\email{mjcloquias@math.upd.edu.ph}

\begin{abstract} 
It is shown that most lattices $\Gamma$ in $\R^2$ and $\R^3$ possess 
a fundamental domain $F$ for the action of $\Gamma$ on $\R^2$, 
respectively $\R^3$, having more symmetries than the point group 
$P(\Gamma)$, i.e., the group $P(\Gamma) \subset O(d)$ fixing  
$\Gamma$. In particular, $P(\Gamma)$ is a subgroup 
of the symmetry group $S(F)$ of $F$ of index 2 in these cases. 
Exceptions are cubic lattices in the three-dimensional case,
where such an $F$ does not exist.
Possible exceptions are rhombic lattices in the plane case,
where the constructions presented here do not seem to work.
\end{abstract} 

% MSC 
% 51F15 Reflection groups, reflection geometries
% 52B15 Symmetry properties of polytopes
% 05B40   	Packing and covering 
% 05B45   	Tessellation and tiling problems
\maketitle

\section{\bf Introduction} \label{sec:intro}
The question inspiring this work is ``Given a lattice $\Gamma$ 
in $\R^d$, how many symmetries can a fundamental domain for the
action of $\Gamma$ on $\R^d$ have?''. We will address this question
for lattices in $\R^2$ and $\R^3$ and provide some answers.

A {\em lattice} in $\R^d$ is the $\Z$-span of $d$ linearly independent
vectors in $\R^d$. The point group $P(\Gamma)$ of a lattice $\Gamma$ 
in $\R^d$ is the set of Euclidean motions fixing both $\Gamma$ and 
the origin. In other words, $P(\Gamma) \subset O(d)$ is the set of
orthogonal maps fixing $\Gamma$. It is clear that each lattice 
$\Gamma$ has a fundamental domain having $P(\Gamma)$ as its symmetry group,
see Proposition \ref{prop:sfpg}. (For more detailed definitions 
see below.) For instance, consider the square lattice $\Z^2$ in the plane
$\R^2$. Its point group $P(\Z^2)$ is the dihedral group $D_4$ of order 
eight, containing rotations by $0, \, \pi/2, \, \pi, \, 3\pi/2$,
together with four reflections. One possible fundamental domain 
of $\Z^2$ is a unit square, centered at the origin. Clearly
the symmetry group of this unit square is $D_{4}$ as well.

In this paper we show that most lattices in $\R^2$ and $\R^3$
possess fundamental domains with more symmetry than the point group
of the lattice. In general, these
fundamental domains will be neither simply connected, nor will their
interiors be connected. Some of these domains are of fractal appearance. 
The two main results are the following. 

\begin{thm} \label{thm:satz1}
Let $\Gamma \subset \R^2$ be a lattice with point group 
$P(\Gamma)$, such that $\Gamma$ is not a rhombic lattice. 
Then there is a compact fundamental domain $F$ of \ $\Gamma$ with
symmetry group $S(F)$ such that $P(\Gamma)$ is a subgroup of $S(F)$
of index $[S(F):P(\Gamma)]=2$.
\end{thm}

\begin{thm} \label{thm:satz2}
Let $\Gamma \subset \R^3$ be a lattice with point group 
$P(\Gamma)$, such that $\Gamma$ is not a cubic lattice. 
Then there is a compact fundamental domain $F$ of \ $\Gamma$ with 
symmetry group $S(F)$ such that $P(\Gamma)$ is a subgroup of $S(F)$
of index $[S(F):P(\Gamma)]=2$.
\end{thm}

In the remainder of this section the necessary definitions and 
notations are introduced. Section \ref{sec:d2} 
is dedicated to the proof of Theorem \ref{thm:satz1},
Section \ref{sec:d3} contains the proof of Theorem \ref{thm:satz2}.
Section 4 contains some remarks and further questions.
% Section \ref{sec:match} gives an application of Theorem \ref{thm:satz1}
% to perfect matchings beween two lattices with short maximal
% distance. 

{\bf Notation:} We denote the cyclic group of order $n$ by $C_n$, and 
the dihedral group of order $2n$ by $D_n$.  The orthogonal
group over $\R^d$ is denoted by $O(d)$. This group can be identified 
with the group of Euclideanmotions (i.e. isometries of $\R^d$, 
including reflections) fixing the origin. The closure of a
set $A \subset \R^d$ is denoted by $\cl(A)$. For any set 
$X \subset \R^d$, let $S(X)$ denote the symmetry group of $X$, that
is, the set of all Euclidean motions (including reflections and
translations) $\varphi: \R^d \to \R^d$ with
$\varphi(X)=X$.  The {\em cone} centered at $x$ spanned by $m$ vectors
$v_1, \ldots v_m \in \R^d$ is defined by 
\[ \cone(x; v_1, \ldots , v_m) = 
\{  x+ \sum\limits_{i=1}^m \lambda_i v_i : \lambda_i \ge 0 \}.\] 
% A sum $x+A$ or $A + x$ means the translate of $A$ by $\{bs x}$. 
A sum $A+B$, where $A,B \subset \R^d$,  always means the 
{\em  Minkowski sum}  
\[ A+B = \{ a+  b \, | \, a \in A, b \in B \}. \]  
The line segment with endpoints 
$x, y \in \R^d$ is denoted by $[x, y]$, while $[x, y] \setminus \{x,y\}$ is denoted $(x,y)$.  
A {\em lattice} in $\R^d$ is a discrete cocompact subgroup of
$\R^d$. Any lattice in $\R^d$ can be written as $\langle b_1,
\ldots, b_d \rangle_{\Z}$, where $b_1, \ldots, b_d$
span $\R^d$. Such a set $\{b_1, \ldots, b_d\}$ is called a 
{\em  basis} of the lattice. A basis of a given lattice is not unique. 

A fundamental domain for the action of a lattice $\Gamma$ on $\R^d$
is a set $F$ such that $F$ 
contains exactly one representative for each element of $\R^d / \Gamma$. 
Here we want to consider nice geometric representations of fundamental
domains. In particular we want to consider compact sets. Hence we allow 
a fundamental domain $F$ to contain more than one representative for 
some element in $\R^d / \Gamma$ if these representatives are all
contained in the boundary of $F$. % Thus we consider a fundamental
% domain for the action of a lattice $\Gamma$ on $\R^d$  
% as a compact set that tiles $\R^d$ by translations of $\Gamma$.
% By a {\em fundamental domain} we denote the 
% closure of some geometric embedding of a fundamental domain into $\R^d$. 
For instance, a proper fundamental domain of the action of $\Z^d$ 
on $\R^d$ is the $d$-torus, and a fundamental domain in the sense of 
this paper of the action of the lattice $\Z^d$ on $\R^d$ is the 
$d$-dimensional unit cube $[0,1]^d$. 
A particular fundamental domain of a lattice $\Gamma = \langle 
b_1, \ldots, b_d \rangle_{\Z}$ is the {\em fundamental
parallelepiped} $[0, b_1]  +  \ldots + [0, b_d]$. 
Note that for any fundamental domain $F$ of a lattice
$\Gamma$, $\{ F + g \, | \, g \in \Gamma \}$ is a tiling of
$\R^d$. A {\em tiling} of $\R^d$ is a packing of $\R^d$ which
is also a covering of $\R^d$. In other words, a tiling is a 
covering of $\R^d$ by pairwise non-overlapping compact sets $T_i$.
Two compact sets are {\em non-overlapping} if their interiors 
are disjoint. 

Trivially, the symmetry group $S(\Gamma)$ of any lattice contains a
subgroup isomorphic to $\Gamma$, namely, the group of all 
translations by elements of $\Gamma$. 
% $T(\Gamma) := \{ t_g: \; x \mapsto g+x \, | \, \; g \in
% \Gamma \}$.  
% The {\em point group} of $\Gamma$ is $S(\Gamma)/\Gamma$.
The subgroup $P(\Gamma) = S(\Gamma) / \Gamma$ is called 
{\em point group} of $\Gamma$. For lattices in $\R^d$, one has:
\[ S(\Gamma) = P(\Gamma) \ltimes \Gamma. \]
The following fact is usually called the crystallographic restriction
(see for instance \cite{cox}, Section 4.5). 

\begin{prop} \label{prop:crystrest}
Rotations fixing a lattice in $\R^2$ or $\R^3$ are either 2-fold, 3-fold,
4-fold or 6-fold.
\end{prop}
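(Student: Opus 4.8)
The plan is the classical trace argument: a rotation fixing $\Gamma$ permutes the lattice, hence is represented by an integer matrix with respect to any lattice basis, so its trace is an integer; but the trace is also determined by the rotation angle, and integrality of $2\cos\theta$ leaves only finitely many possibilities.

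First I would fix a rotation $R \in O(d)$, $d \in \{2,3\}$, with $R(\Gamma) = \Gamma$, and choose a basis $b_1, \ldots, b_d$ of $\Gamma$ (such a basis exists by the definition of a lattice). Since each $R b_i$ lies in $\Gamma = \langle b_1, \ldots, b_d \rangle_{\Z}$, the matrix $M$ of $R$ with respect to the basis $b_1, \ldots, b_d$ has integer entries, so $\operatorname{tr}(M) \in \Z$. As $M$ and the matrix of $R$ with respect to the standard orthonormal basis of $\R^d$ represent the same linear map, they are conjugate and share the same trace; hence $\operatorname{tr}(R) \in \Z$.

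Next I would compute this trace from the rotation angle. In $\R^2$ a rotation by $\theta$ has trace $2\cos\theta$. In $\R^3$, any non-identity element of $SO(3)$ fixes a line (its axis) and acts on the orthogonal plane as a planar rotation by some angle $\theta$; in an orthonormal basis adapted to this splitting $R$ is block diagonal with a $2 \times 2$ rotation block and an entry $1$, so $\operatorname{tr}(R) = 1 + 2\cos\theta$. (If one also allows the improper orthogonal maps of $O(3)$, the corresponding normal form gives $\operatorname{tr}(R) = -1 + 2\cos\theta$, and nothing else changes.) In all cases integrality forces $2\cos\theta \in \Z$, i.e.\ $\cos\theta \in \{-1,\, -\tfrac12,\, 0,\, \tfrac12,\, 1\}$.

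Finally I would read off the orders. The admissible values give $\theta \in \{0,\, \tfrac{\pi}{3},\, \tfrac{\pi}{2},\, \tfrac{2\pi}{3},\, \pi\}$ modulo $2\pi$, that is, rotations of order $1, 6, 4, 3, 2$ (order $1$ being the identity); equivalently, for $n \notin \{1,2,3,4,6\}$ the number $2\cos(2\pi/n)$ fails to be an integer — $n = 5$ gives an irrational value, and every $n \geq 7$ gives a value strictly between $1$ and $2$. I do not anticipate a genuine difficulty here; the only steps deserving a word of justification are the reduction to an integer matrix through a lattice basis and, for $d = 3$, the $SO(3)$ normal form behind the trace formula $1 + 2\cos\theta$.
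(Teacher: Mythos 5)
Your argument is correct and complete: the paper itself gives no proof of Proposition \ref{prop:crystrest}, quoting it with a reference to Coxeter, so there is nothing in the text to compare against line by line. For what it is worth, the proof in the cited source (Coxeter, Section 4.5) is the geometric one --- take a lattice point at minimal positive distance from a rotation centre and show that an $n$-fold rotation with $n=5$ or $n\ge 7$ produces a lattice vector shorter than the minimum --- whereas you give the standard algebraic alternative via integrality of the trace. Your version has the advantage of handling $\R^2$ and $\R^3$ (and, with the normal form for $O(d)$, any fixed dimension) uniformly, and of covering improper isometries with no extra work; the only steps that need care, namely that the matrix of $R$ in a lattice basis is integral and that trace is basis-independent, are both addressed. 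The residual points are cosmetic: the proposition's list omits the identity, which you correctly set aside as the $1$-fold case, and your closing remark about $2\cos(2\pi/n)$ is redundant given that you have already pinned down all admissible angles $\theta$, not just those of the form $2\pi/n$.
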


The {\em Voronoi cell} of a lattice point $x$ in $\R^d$ is the set of 
points in $\R^d$ whose distance to $x$ is not greater than their 
distance to any other lattice point. It is easy to see that 
for any lattice $\Gamma \subset \R^d$ the closed Voronoi
cell $V=V(0)$ of $0$ is a fundamental domain of $\Gamma$ with 
$P(\Gamma) \subseteq S(V)$. Now, let $\sigma \in S(V)$. For $v \in \Gamma$, 
let $\pi_{v}$ be the perpendicular bisector of $[0,v]$, and $H_{v}$ be the 
half-space delineated by $\pi_{v}$ containing $0$. Then, 
$V=\displaystyle \bigcap_{v \in \Gamma \setminus \{0\}} H_{v}$. 
In particular, there exist $v_{1}$, $\ldots$, $v_{n}$ such that  
$V=\displaystyle \bigcap_{i=1}^{n} H_{v_{i}}$, where each $H_{v_{i}}$ is 
called a supporting hyperplane of $V$. 

We claim that $\langle v_{1}, \ldots, v_{n} \rangle_{\Z} = \Gamma$. First, we 
note that $\Gamma' = \langle v_{1}, \ldots, v_{n} \rangle_{\Z}$ is a lattice 
of full rank $d$, otherwise  $v_{1}$, $\ldots$, $v_{n}$ are all contained in 
one hyperplane in $\R^{d}$ and $\displaystyle \bigcap_{i=1}^{n} H_{v_{i}}$ 
cannot be a fundamental domain for $\Gamma$. Moreover, $\Gamma'$ cannot be a 
proper sublattice of $\Gamma$, because the Voronoi cell of $0$ in $\Gamma'$ must also 
be $\displaystyle \bigcap_{i=1}^{n} H_{v_{i}}$.

For $i=1$, $\ldots$, $n$, let $f_{v_{i}}$ be the face of $V$ contained in $H_{v_{i}}$. 
Now, for each $i$, there exists $j \in \{1,\ldots,n\}$ such that 
$\sigma\big(f_{v_{i}}\big)=f_{v_{j}}$. It follows that 
$\sigma\big(\pi_{v_{i}}\big)=\pi_{v_{j}}$, 
and $\sigma\big(\frac{1}{2}v_{i}\big)=\frac{1}{2}v_{j}$, 
as $\frac{1}{2}v_{i}$ and $\frac{1}{2}v_{j}$ are the unique points on 
$\pi_{v_{i}}$ and $\pi_{v_{j}}$, respectively, of minimal distance to $0$. 
Hence, $\sigma$ permutes $\{v_{1},\ldots,v_{n}\}$ and fixes $0$, so that 
$S(V) \subseteq P(\Gamma)$. We thus have the following proposition. 

\begin{prop} \label{prop:sfpg}
If $\Gamma$ is a lattice in $\R^d$ then $\Gamma$ has a 
fundamental domain $F$ such that $S(F) = P(\Gamma)$. 
\end{prop}

We will use orbifold notation to denote planar symmetry groups in the 
sequel, compare \cite{cgb}. For instance, $\ast 442$ denotes the 
symmetry group $S(\Z^2)$ of the square lattice $\Z^2$, and $\ast432$
denotes the symmetry group of the cube. For a translation of orbifold
notation into your favourite notation, see \cite{cgb} or \cite{wik2}.
In principle we can denote cyclic groups $C_n$ and dihedral groups 
$D_n$ in orbifold notation, too. Since the symbol for $C_n$---regarded as
the symmetry group of some object in the plane---is just $n$ in
orbifold notation, we will rather use the former abbreviation for 
the sake of clarity.

\section{\bf Dimension 2} \label{sec:d2}

It is well known that each finite group of Euclidean motions in the 
plane is either $C_n$ or $D_n$. By the crystallographic restriction 
(Proposition \ref{prop:crystrest}) there are just 10 candidates 
for such groups being point groups of a planar lattice, namely
\[ C_1, C_2, C_3, C_4, C_6, D_1, D_2, D_3, D_4, D_6. \]
Note that $C_2$ and $D_1$ are equal as abstract groups, since there is 
only one group of order two up to isomorphisms. But since we are dealing 
with groups of Euclidean motions, we will use the convention that 
a cyclic group $C_n$ contains rotations only, and a dihedral group 
$D_n$ contains $n$ rotations (including the identity) and $n$ reflections.  
The fact that each planar lattice is fixed under a rotation through $\pi$
about the origin implies that $C_1, \, C_3, \, D_1$ and $D_3$ cannot be
point groups of any planar lattice. Some further thought yields the
following result.

\begin{prop} 
If $\Gamma$ is a lattice in $\R^2$, then $P(\Gamma) \in \{C_2, D_2,
D_4, D_6 \}$, and $S(\Gamma) \in \{\ast632$, $\ast442,$ $\ast2222,$ 
$2\ast22,$ $2222 \}$.
\end{prop}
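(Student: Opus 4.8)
The plan is to classify the possible point groups by exhaustion. By the preceding discussion we already know $P(\Gamma)$ must be one of the ten groups $C_1, C_2, C_3, C_4, C_6, D_1, D_2, D_3, D_4, D_6$, and that $C_1, C_3, D_1, D_3$ are impossible because every planar lattice admits the point reflection $x \mapsto -x$, forcing $C_2 \subseteq P(\Gamma)$ and hence $P(\Gamma)$ to have even order. So six candidates remain: $C_2, C_4, C_6, D_2, D_4, D_6$. The first step is therefore to rule out the two purely cyclic candidates $C_4$ and $C_6$.

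For $C_4$: if a lattice $\Gamma$ is fixed by the rotation $R$ through $\pi/2$, pick a shortest nonzero vector $b \in \Gamma$. Then $b$ and $Rb$ are orthogonal and of equal length, so they span a square sublattice; minimality of $\|b\|$ forces $\Gamma = \langle b, Rb\rangle_\Z$ to be (a scaled, rotated copy of) $\Z^2$. But the square lattice also admits reflections in the coordinate axes, so $D_4 \subseteq P(\Gamma)$, contradicting $P(\Gamma) = C_4$. The same argument works for $C_6$: a $6$-fold rotation together with a shortest vector $b$ produces the triangular lattice $\langle b, Rb\rangle_\Z$, which is reflection-symmetric, so its point group is $D_6 \supsetneq C_6$. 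Hence whenever a lattice carries a $4$- or $6$-fold rotation it automatically carries reflections, which eliminates $C_4$ and $C_6$ and leaves exactly $P(\Gamma) \in \{C_2, D_2, D_4, D_6\}$. Each of these is realized: a generic lattice gives $C_2$, a rectangular (non-square) lattice gives $D_2$, the square lattice gives $D_4$, and the triangular lattice gives $D_6$.

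For the second assertion one uses $S(\Gamma) = P(\Gamma) \ltimes \Gamma$ and simply reads off the orbifold symbol of the resulting wallpaper group in each of the four cases. When $P(\Gamma) = D_6$ the group $S(\Gamma)$ is the full triangular wallpaper group, which is $\ast632$; when $P(\Gamma) = D_4$ it is the square wallpaper group $\ast442$; when $P(\Gamma) = C_2$ it is the group generated by a lattice of $2$-fold centres, namely $2222$. The only subtlety is $P(\Gamma) = D_2$: here the mirror lines of the point group can interact with the translations in two genuinely different ways depending on whether the lattice is rectangular or rhombic (centred rectangular), producing either all intersecting mirrors ($\ast2222$, the $pmm$ case) or a pattern in which some $2$-fold centres lie off the mirror lines ($2\ast22$, the $cmm$ case). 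I would note that both do occur and that this accounts for the fifth symbol in the list. So $S(\Gamma) \in \{\ast632, \ast442, \ast2222, 2\ast22, 2222\}$, matching the stated five possibilities.

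The main obstacle is this last point: the correspondence $P(\Gamma) \mapsto S(\Gamma)$ is not a function, since $D_2$ yields two distinct wallpaper groups, so one cannot merely ``lift'' point groups to space groups but must check how the reflections of $P(\Gamma)$ sit relative to the translation lattice. Everything else is the standard shortest-vector argument for the crystallographic restriction, which is routine and could even be cited, but the $D_2$ dichotomy between $pmm$ and $cmm$ (equivalently between rectangular and rhombic lattices) is exactly the distinction that the rest of the paper hinges on, so it deserves to be spelled out here.
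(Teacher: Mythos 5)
Your proof is correct, but it runs in the opposite direction from the paper's. The paper's argument is a case analysis on the geometry of a lattice basis: it enumerates the five lattice types (square, hexagonal, rhombic, rectangular, oblique) by the lengths of and angle between basis vectors, and reads off $P(\Gamma)$ and $S(\Gamma)$ in each case. You instead start from the group-theoretic side: from the ten candidate finite groups you eliminate $C_1, C_3, D_1, D_3$ via $-I \in P(\Gamma)$, and then eliminate $C_4$ and $C_6$ by the shortest-vector argument showing that a $4$-fold (resp.\ $6$-fold) rotation forces the lattice to be square (resp.\ triangular) and hence to carry reflections. This buys something the paper's sketch leaves implicit --- an actual proof that no lattice has point group exactly $C_4$ or $C_6$, and that the five geometric types are exhaustive --- whereas the paper's basis-driven enumeration more directly yields the list of wallpaper groups. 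Your treatment of the $D_2$ case, distinguishing $\ast2222$ ($pmm$, rectangular) from $2\ast22$ ($cmm$, rhombic) by how the mirrors sit relative to the translations, matches the paper's and is exactly the distinction the rest of the paper relies on. One small slip in your justification: ``$P(\Gamma)$ has even order'' does not by itself exclude $D_1$ or $D_3$ (these have orders $2$ and $6$); what excludes them is the containment $-I \in P(\Gamma)$ itself, since $-I$ is a rotation through $\pi$ and neither $D_1$ nor $D_3$ contains one --- which you also state, so the argument stands.
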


This result is well known. Nevertheless, since we are not aware of a 
decent reference, we will sketch the proof here.

\begin{proof}
We consider the distinct possibilities of properties of basis vectors
of $\Gamma$. First, if $\Gamma$ has a basis of two orthogonal
vectors of equal length, this yields (up to similarity) the square lattice 
$\Z^2$, with point group $D_4$ and symmetry group $\ast442$. 
Second, if $\Gamma$ has a basis of two vectors of equal length 
with angle $\pi/3$, this yields (up to similarity) the hexagonal 
lattice $A_2 = \langle (1,0)^T, (\frac{1}{2}, \frac{\sqrt{3}}{2})^T
%\big( \begin{smallmatrix} 1 \\ 0 \end{smallmatrix} \big) ,  
%\big( \begin{smallmatrix} \sqrt{3}/2 \\ 1/2 \end{smallmatrix} \big) 
\rangle_{\Z}$, with point group $D_6$ and symmetry group $\ast632$. 
Third, if $\Gamma$ has a basis of two vectors of equal 
length, but neither with angle $\pi/3$ nor $\pi/2$ 
nor $2\pi/3$, then $\Gamma$ is called {\em rhombic lattice} 
and has point group $D_2$ and symmetry group $2\ast22$. 
A planar lattice which has orthogonal basis vectors of different 
length (but not of equal length) is called {\em rectangular lattice}. 
It has also point group $D_2$, but its
symmetry group is $\ast2222$. In particular, the entire 
symmetry group of a rhombic lattice is not isomorphic to the
entire symmetry group of a rectangular lattice, even though their
point groups agree. (Compare \cite{schw}, p 210.) All other lattices are called 
{\em oblique lattices} and have point group $C_2$, and symmetry group
$2222$.  \end{proof}

Regarding the five cases above, in connection with 
Proposition~\ref{prop:sfpg} one obtains that one possible
fundamental domain for the hexagonal (respectively square, rectangular, rhombic, 
oblique) lattice is a regular hexagon (respectively square, rectangle, hexagon 
with $D_2$ symmetry, hexagon with $C_2$ symmetry). 
We will prove Theorem \ref{thm:satz1} by considering four out of
these five cases. The first two cases---the square lattice and the hexagonal
lattice---are due to V. 
Elser \cite{elser}. To the knowledge of the authora his proof has not
been published anywhere, so we give a detailed proof here.

\begin{prop}[Elser] \label{eins}
The square lattice $\Z^2$ has a fundamental domain
$F_{\square}$ with $S(F_{\square}) = D_8$. 
\end{prop}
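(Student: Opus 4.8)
The plan is to construct $F_{\square}$ directly as a union of eight congruent pieces permuted cyclically by the rotation $R_{\pi/4}$ about the origin, so that invariance under $D_8$ is built in, and only afterwards to shape those pieces so that the eight of them tile $\R^2$ under $\Z^2$. The working criterion I would use is the one implicit in the definitions: a compact set $F\subset\R^2$ which is the closure of its interior part is a fundamental cell of $\Z^2$ exactly when $\sum_{g\in\Z^2}\mathbf 1_F(x+g)=1$ for almost every $x$; this recasts the whole problem as a statement about indicator functions folded onto the torus $\mathbb T=\R^2/\Z^2$. Writing $\sigma$ for the reflection in the $x$-axis, so that $D_8=\langle R_{\pi/4},\sigma\rangle$, I would look for
\[
F_{\square}=\bigcup_{k=0}^{7}R_{\pi/4}^{\,k}\,T,
\]
where $T$ is a compact ``blade'' contained in the closed sector $\{x\neq 0:|\arg x|\le\pi/8\}\cup\{0\}$ and invariant under $\sigma$. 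With this ansatz $S(F_{\square})\supseteq D_8$ holds automatically, and since the eight rotated sectors cover $\R^2$ and meet pairwise only along rays, $\mathbf 1_{F_{\square}}=\sum_{k=0}^{7}\mathbf 1_{R_{\pi/4}^{\,k}T}$ off a null set.

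Next I would carry out the reduction. Splitting the sum over $k$ into even and odd parts and using $R_{\pi/4}^{\,2}=R_{\pi/2}$ gives $\mathbf 1_{F_{\square}}=\mathbf 1_U+\mathbf 1_{R_{\pi/4}U}$ off a null set, where $U:=\bigcup_{j=0}^{3}R_{\pi/2}^{\,j}T$ is $D_4$-invariant and $U$, $R_{\pi/4}U$ are essentially disjoint (the former occupies the four axis sectors, the latter the four diagonal sectors). Folding by $\Z^2$, the fundamental-cell criterion turns into
\[
\sum_{g\in\Z^2}\mathbf 1_U(x+g)+\sum_{g\in\Z^2}\mathbf 1_{R_{\pi/4}U}(x+g)=1\qquad\text{for a.e. }x,
\]
that is: $U$ and its $\tfrac{\pi}{4}$-rotate partition the torus (in measure, and hence --- once $U$ is nice enough --- as an honest tiling). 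So the task is reduced to producing a single $\sigma$-symmetric blade $T$, equivalently one $D_4$-symmetric region $U$ of area $\tfrac{1}{2}$, with this partition property. Note also that $D_8$ is a subgroup of no lattice point group, so any $F_{\square}$ obtained this way has strictly more symmetry than $P(\Z^2)=D_4$.

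The third and decisive step is constructing $T$. The obvious first attempt, the triangular blade of area $\tfrac{1}{8}$, produces for $F_{\square}$ the regular octagon of area $1$: it is $D_8$-symmetric with the right area, but its $\Z^2$-translates overlap along the axis directions and leave square gaps near the diagonals, and this defect pattern is merely $D_4$-symmetric, so it cannot be removed by any finite rearrangement compatible with the eight-fold structure. Instead I would correct the blade by a countable sequence of moves, each excising finitely many pieces from $U$ inside a single axis sector, propagating the excision $D_4$- and $\sigma$-equivariantly, and re-inserting appropriate $\Z^2$-translates of those pieces so as to shrink the current overlap-and-gap defect while creating only a strictly smaller one; the blade $T$ is then the limit of this process, and the resulting $F_{\square}$ will typically be neither simply connected nor of connected interior, and of fractal appearance. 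I expect this limiting construction to be the crux of the argument: the moves must be chosen so that the total defect (in Lebesgue measure) and the sizes of the moved pieces (in diameter) both decay geometrically, so that the blades converge in the Hausdorff metric to a compact set for which the partition property, and thus the genuine tiling property, is preserved in the limit; and every move must stay inside the prescribed sector and respect $D_4$, so that the eight-blade decomposition, and with it the full $D_8$-symmetry, is never lost.

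It remains to check that $S(F_{\square})$ is exactly $D_8$. A fundamental cell is compact, so $S(F_{\square})$ is finite and, after centring it at the origin (legitimate by symmetry), is cyclic or dihedral; it contains $D_8$; and the construction can be arranged so that the blade $T$ fails to be symmetric under the reflection interchanging its two bounding rays --- for instance by letting $T$ extend farther along $\arg x=0$ than along $\arg x=\pm\pi/8$ --- which rules out $D_{16}$ and any finer rotation. Hence $S(F_{\square})=D_8$, giving in particular the instance $[S(F_{\square}):P(\Z^2)]=2$ of Theorem \ref{thm:satz1} for the square lattice.
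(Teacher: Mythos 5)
Your framework is sound as far as it goes: writing $F_{\square}$ as eight blades permuted by $R_{\pi/4}$, reducing the tiling condition to ``$U$ and $R_{\pi/4}U$ partition the torus'', and observing that the regular octagon is the natural but failing first approximation all match the spirit of the paper's argument (which likewise starts from the octagon packing $4.8^2$ and repairs its defects). But the proof has a genuine gap exactly where you say you ``expect the limiting construction to be the crux'': the blade $T$ is never actually constructed. The step ``correct the blade by a countable sequence of moves, each excising finitely many pieces \ldots and re-inserting appropriate $\Z^2$-translates \ldots so as to shrink the current defect while creating only a strictly smaller one'' is a description of what a construction must achieve, not a construction. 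Nothing guarantees that such moves exist at every stage: the excised material must simultaneously (i) be placed $D_4$- and $\sigma$-equivariantly so that the eight-fold symmetry about the origin survives, (ii) land, after translation by $\Z^2$, precisely in the current gaps (which sit at different positions relative to the origin than the excised pieces), and (iii) have diameters and total measure decaying geometrically. Reconciling the point symmetry about $0$ with the translation structure of $\Z^2$ is the entire difficulty of the proposition, and it is not addressed.

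For comparison, the paper resolves this with an explicit self-similar recipe: octagons of edge length $(\sqrt2-1)^n$ are placed at the vertices of the previous generation's octagons, and each octagon whose centre is shared by a red and a white predecessor is cut into eight congruent cone pieces coloured alternately red and white, with the orientation of the colouring determined by which adjacent octagon centre is red. The key verifications --- that exactly two of the eight distinguished points are centres of larger octagons and differ by $|i-j|=3$ (so the rule is well defined and consistent), that red and white parts never overlap so that all $\Z^2$-translates of the limit set are non-overlapping, that the uncovered area is $c(\sqrt2-1)^n\to 0$, and that the diameters grow by $h(\sqrt2-1)^n$ so the limit is bounded --- are precisely the concrete facts your proposal would need in place of the unspecified ``moves''. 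Without an explicit rule of this kind and the accompanying estimates, the existence of $T$, and hence of $F_{\square}$, has not been established.
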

\begin{proof}
The point group of the square lattice $\Z^2$ is $D_4$. 
The claim is proved by constructing a fundamental domain $F_{\square}$
of $\Z^2$ with symmetry group $D_8$.

Let $\Gamma = \Z^{2}$. Consider a regular octagon of edge length $\ell=\sqrt{2}-1$ 
oriented such that it has edges parallel to the coordinate axes. Let $P_{1}$ be the 
packing of the plane by copies of the octagon, with every point in $\Gamma$ having 
one copy centered at it. See Figure~\ref{fig:OctPackings}. The packing looks like 
the Archimedean tiling $4.8^2$ by octagons and squares, where the  squares are the 
holes of the packing. By the choice of $\ell$ and the orientation of the octagons, 
a pair of octagons intersect if and only if their centers are one unit apart, and 
the two intersect only at a common edge. That is, the octagons are pairwise 
non-overlapping. For $n \geq 2$, let $P_{n}$ be the packing by octagons having the 
same orientation as those in $P_{1}$ and of edge length $\ell^{n}$, centered at 
each vertex of every octagon in $P_{n-1}$. Similarly, a pair of octagons in the 
$n^{\text{th}}$ step intersect if and only if their centers are consecutive vertices 
in some octagon in $P_{n-1}$,  and the two intersect only at a common edge. In 
Figure~\ref{fig:OctPackings}, we have $\displaystyle\bigcup_{k=1}^{n} P_{k}$ for 
$n \leq 4$. 

	\begin{figure}[ht]
	\begin{center}
	\includegraphics[scale=0.25]{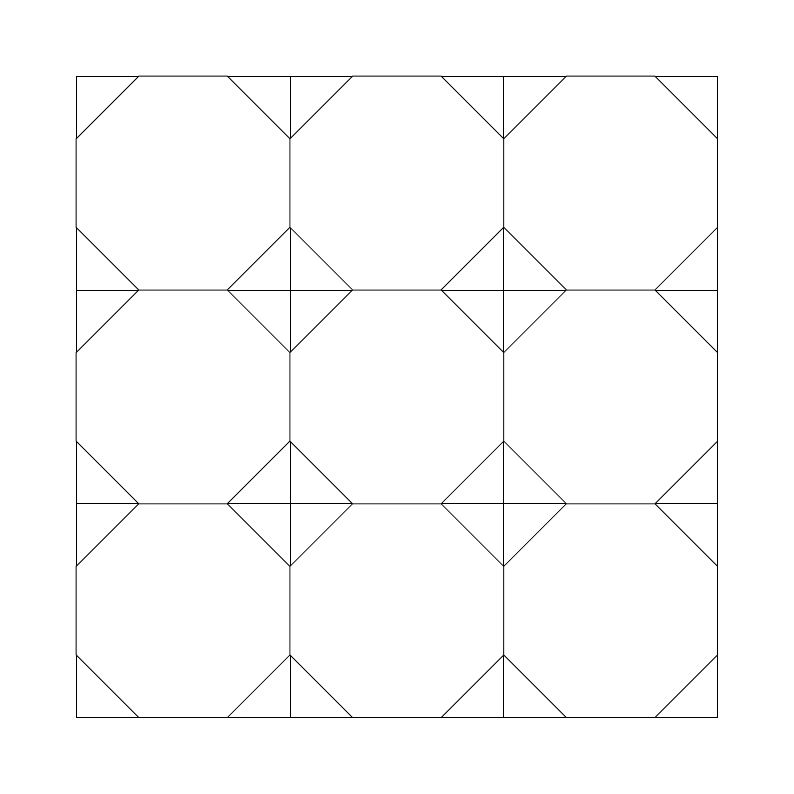}\quad
	\includegraphics[scale=0.25]{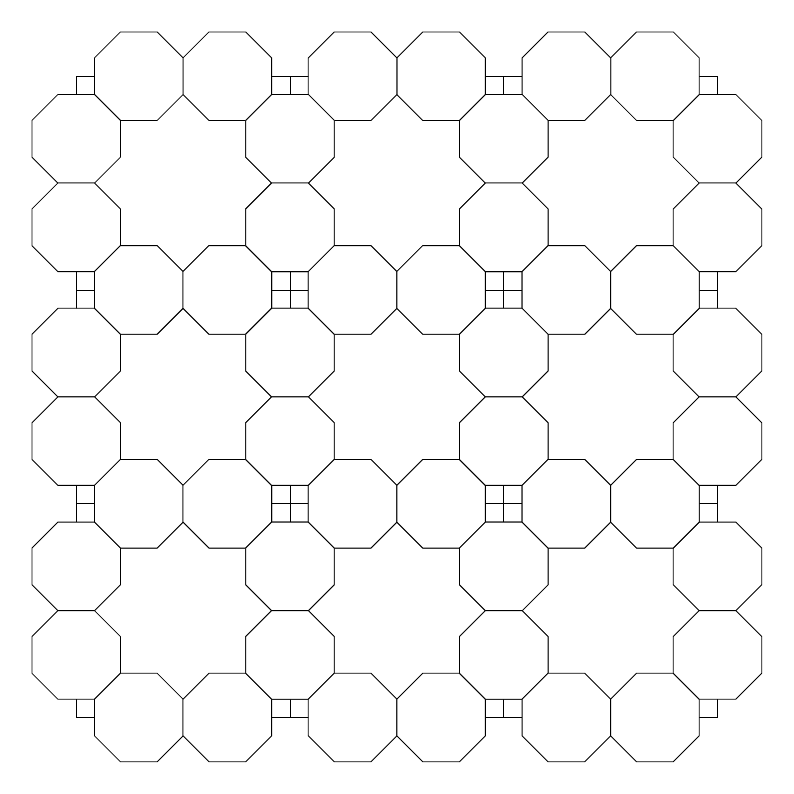}\quad
	\includegraphics[scale=0.25]{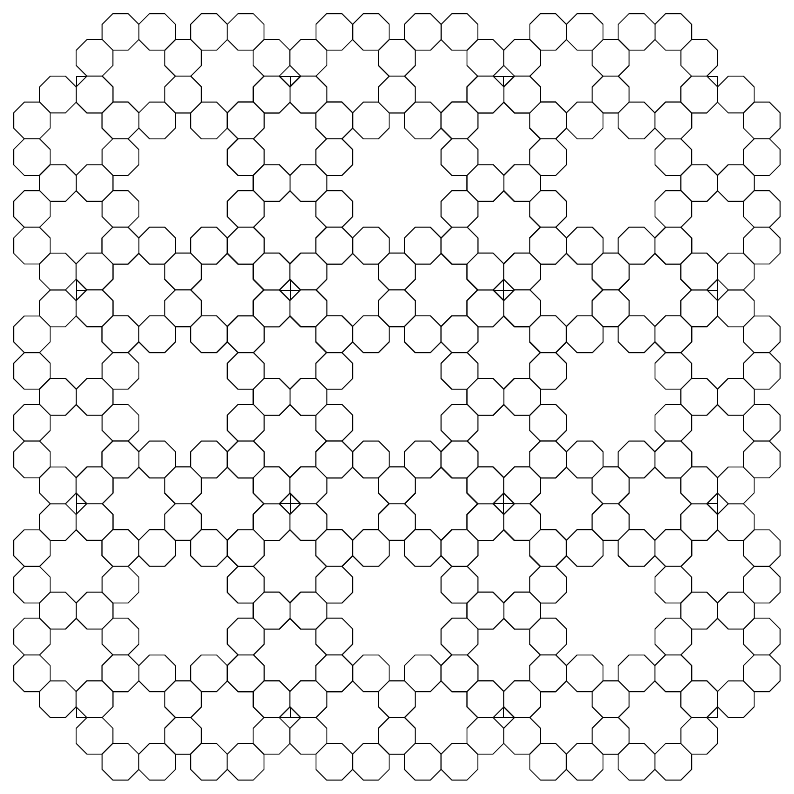}\quad
	\includegraphics[scale=0.25]{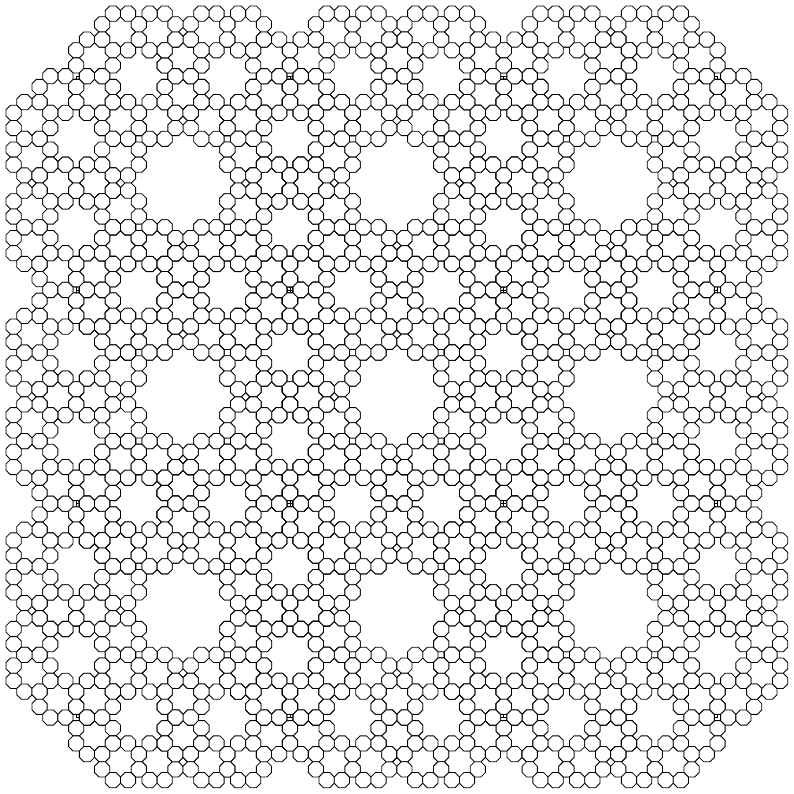}
	\caption{Filling the plane with octagons.}
	\label{fig:OctPackings}
	\end{center}
	\end{figure}

Define $P_{1}^{*}$ as the octagon in $P_{1}$ that is centered at the origin. 
For $n\geq 2$, let $P_{n}^{*}$ consist of the octagons of $P_{n}$ centered 
at the vertices of all octagons in $P_{n-1}^{*}$. See 
Figure~\ref{fig:OctCentralPackings} for illustrations of 
$\displaystyle\bigcup_{k=1}^{n} P_{k}^{*}$ for $n \leq 4$.

	\begin{figure}[ht]
	\begin{center}
	\includegraphics[scale=0.4]{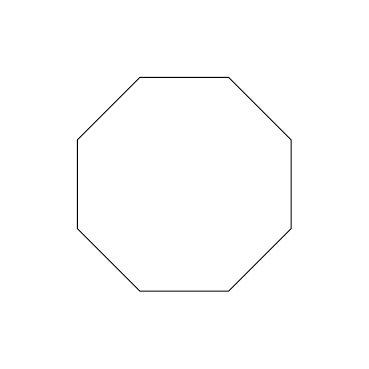}\quad
	\includegraphics[scale=0.4]{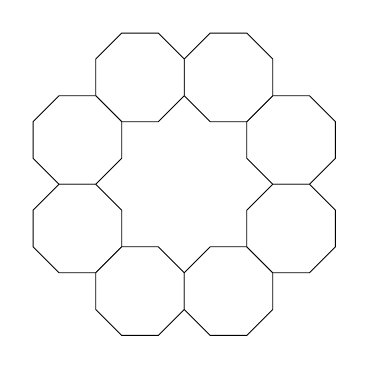}\quad
	\includegraphics[scale=0.4]{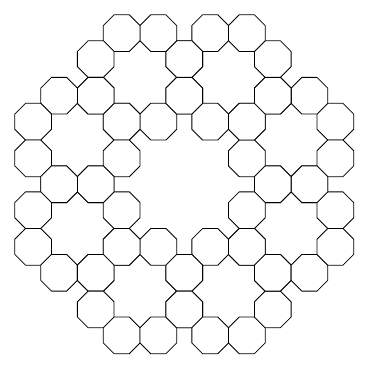}\quad
	\includegraphics[scale=0.4]{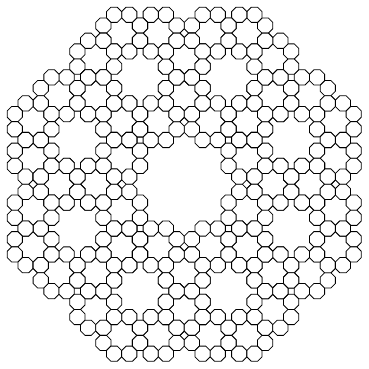}
	\caption{Octagons generated by the ``central'' octagon in $P_{1}$.}
	\label{fig:OctCentralPackings}
	\end{center}
	\end{figure}

Clearly, $P_{n}^{*} + \Gamma = P_{n}$, and 
$\displaystyle\bigcup_{k=1}^{n} P_{k}^{*} + \Gamma = 
\displaystyle\bigcup_{k=1}^{n} P_{k}$. Observe as well that 
$\sym(P_{n}^{*})=D_{8}$ for all $n$. We will construct for each $n$ a compact 
subset $F_{n}$ of $P_{n}^{*}$ satisfying the following properties:

	\begin{enumerate}[S1.]
	\item For every $n$, $\sym(F_{n})=D_{8}$.
	\item For every $n$, $F_{n}+\Gamma=\displaystyle\bigcup_{k=1}^{n}P_{k}$.
	\item For every $n$ and any nontrivial $\mathbf{v} \in \Gamma$, 
	$\intr(F_{n}) \cap \intr(\mathbf{v}+F_{n}) = \varnothing$.
	\item The sequence $\set{F_{n}}_{n=1}^{\infty}$ is Cauchy in $H(\R^{2})$,
	the space of non-empty compact subsets of $\R^{2}$ equipped with the induced 
	Hausdorff metric.
	\end{enumerate}
	
If there exist such $F_{n}$, let $F_{\square}=\dlim_{n \rightarrow \infty}  F_{n}$, 
which is well-defined by completeness of $H(\R^{2})$. By conditions S2 and S3, and 
the fact that $\cl\left(\displaystyle\bigcup_{k=1}^{\infty} P_{k}\right)=\R^{2}$, 
$F_{\square}$ is a fundamental domain for $\Gamma$. Condition S1 and continuity of 
isometries in $H(X)$ imply that $\sym\left(F_{\square}\right)=D_{8}$.

To this end, color $P_{1}^{*}$ red and let $R_{1}=P_{1}^{*}$ and $F_{1}=R_{1}$. 
Then, $F_{1}$ satisfies S1, S2, and S3. At any succeeding step, an octagon $O$ with 
center $x$ will be colored purely red, purely white, or in the following manner: 
First, divide $O$ into eight congruent slices, namely 
$S_{i} = O \cap \cone(x;e_{i},e_{i+1})$ for $0 \leq i \leq 7$. Color $S_{k}$, 
where $k$ is even, all red or all white, and color the rest of the slices oppositely. 
Thus, the slices are colored in alternating fashion. See Figure~\ref{fig:ColorsOct}.

	\begin{figure}[ht]
	\begin{center}
	\includegraphics[scale=0.6]{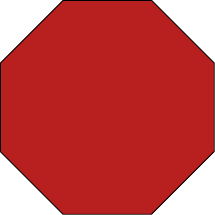}\qquad
	\includegraphics[scale=0.6]{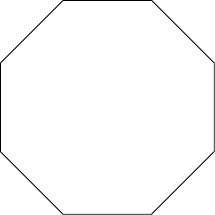}\qquad
	\includegraphics[scale=0.6]{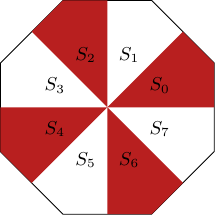}\qquad
	\includegraphics[scale=0.6]{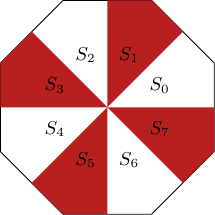}
	\caption{All possible colorings of any octagon at any step.}
	\label{fig:ColorsOct}
	\end{center}
	\end{figure}

After coloring the octagons in step $n$ according to the rule to be described 
below, call the union of the red pieces $R_{n}$, and define 
$F_{n} = \cl\left[(F_{n-1}\setminus P_{n}^{*})\cup R_{n}\right]$.

We now describe the coloring procedure for step $n$, where $n \geq 2$, 
with the assumption that all octagons in previous steps were colored in one of 
the four ways in Figure~\ref{fig:ColorsOct}. If $O$ is an octagon in $P_{n}$ 
with center $x$, then $x$ is the vertex of either one or two octagons in 
$P_{n-1}^{*}$, as in Figure~\ref{fig:OctAlg}.

	\begin{figure}[ht]
	\begin{center}
	\includegraphics[scale=0.8]{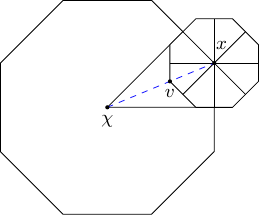} \qquad
	\includegraphics[scale=0.8]{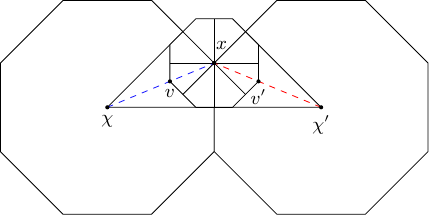}
	\caption{An octagon may be centered at an unshared or a shared vertex.}
	\label{fig:OctAlg}
	\end{center}
	\end{figure}

Suppose there is exactly one such octagon $\mathcal{O}$, with center $\chi$. 
Now, there is a unique vertex $v$ of $O$ such that $v$ lies on $(x,\chi)$. 
Let $S$ be the slice of $O$ containing $v$, that is, 
$v \in S \coloneqq O \cap \cone(x;e_{i},e_{i+1})$ for some unique 
$i \in \{0,1,2,\ldots,7\}$. Then, because $(x,\chi)$ is contained in a slice 
of $\mathcal{O}$, either $(x,\chi) \subseteq F_{n-1}$ or $(x,\chi)$ is disjoint 
with $F_{n-1}$. Furthermore, $\intr(O \setminus \mathcal{O})$ is contained in 
either $\R^{2} \setminus \displaystyle\bigcup_{k=1}^{n-1} P_{k}^{*}$, or in a 
portion of a slice of an octagon in some previous $P_{k}^{*}$, $k<n-1$, that has 
not been re-colored after the $k$th step. In either case, either 
$\intr(O \setminus \mathcal{O}) \subseteq F_{n-1}$ 
or $\intr(O \setminus \mathcal{O})$ is disjoint with $F_{n-1}$.

We divide $O$ into two subsets 
$\displaystyle\bigcup_{i=0}^{3} O \cap \cone(x;e_{2i},e_{2i+1})$ and 
$\displaystyle\bigcup_{i=0}^{3} O \cap \cone(x;e_{2i+1},e_{2i+2})$ with 
non-overlapping interiors. As mentioned previously, each of these will be 
colored purely red or purely white. If the subset containing $(x,v)$ is denoted 
by $\mathcal{C}_{v}$, then the other subset is 
$\cl\left(O \setminus \mathcal{C}_{v}\right)$. 

	\begin{enumerate}[1.]
	\item Color $\mathcal{C}_{v}$ red if and only if $(x,\chi) \subseteq F_{n-1}$.
	\item Color $\cl\left(O \setminus \mathcal{C}_{v}\right)$ red if and only if 
	$\intr(O \setminus \mathcal{O}) \subseteq F_{n-1}$.
	\end{enumerate}
	
In other words, if $(x,\chi)$ and $\intr(O \setminus \mathcal{O})$ have the same 
color after step $n-1$, then in step $n$ we color $O$ the same way. On the other 
hand, if $(x,\chi)$ and $\intr(O \setminus \mathcal{O})$ are oppositely colored 
after step $n-1$, we color the slices of $O$ alternately red and white such that 
$S$ inherits the color of $(x,\chi)$.

Suppose there are two octagons in $P_{n-1}^{*}$ having $x$ as a vertex, say 
$\mathcal{O}$ and $\mathcal{O}'$ with centers $\chi$ and $\chi '$, respectively. 
Let $v$ and $v'$ be the vertices of $O$ lying on $(x,\chi)$ and $(x,\chi ')$, 
respectively, and $\mathcal{C}_{v}$ and 
$\cl\left(O \setminus \mathcal{C}_{v}\right)$ be as before. 

	\begin{enumerate}[1.]
	\item Color $\mathcal{C}_{v}$ red if and only if $(x,\chi) \subseteq F_{n-1}$.
	\item Color $\cl\left(O \setminus \mathcal{C}_{v}\right)$ red if and only if 
	$(x,\chi') \subseteq F_{n-1}$.
	\end{enumerate}
	
This is well-defined because $v'$ is either in $O \cap \cone(x;e_{i+3},e_{i+4})$ or 
$O \cap \cone(x;e_{i-3},e_{i-2})$.

To demonstrate, in step 2, let $O$ be any octagon in $P_{2}^{*}$. Refer to 
Figure~\ref{fig:Step2OctCentral}. Then, its center is an unshared vertex, and 
$\mathcal{O}=F_{1}=P_{1}^{*}$, and 
$\intr(O \setminus \mathcal{O}) \subseteq \R^{2} \setminus P_{1}^{*}$. Thus, the 
associated $(x,\chi) \subseteq F_{1}$ and $\intr(O \setminus \mathcal{O})$ is 
disjoint with $F_{1}$. Therefore, we color $\mathcal{C}_{v}$ red and 
$\cl\left(O \setminus \mathcal{C}_{v}\right)$ white.  Note that $F_{2}$ satisfies 
S1, S2, and S3. 

	\begin{figure}[ht]
	\begin{center}
	\includegraphics[scale=0.6]{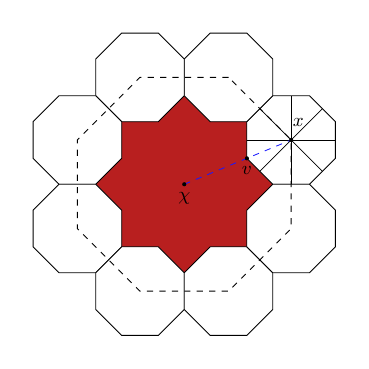} \qquad
	\includegraphics[scale=0.6]{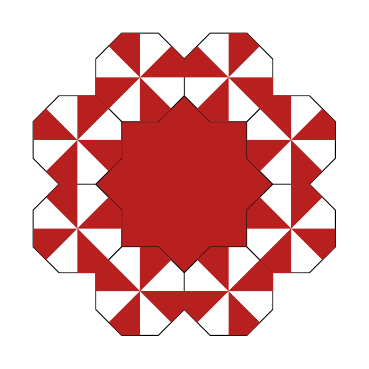}
	\caption{Second step of construction of a highly symmetric fundamental domain 
	for $\mathbb{Z}^{2}$.}
	\label{fig:Step2OctCentral}
	\end{center}
	\end{figure}

For $n \geq 3$, assume that $F_{n-1}$ satisfies S1, S2, and S3. We prove that 
so does $F_{n}$. First, we show that it satisfies S1. Because $\sym(F_{n-1})=D_{8}$, 
then for any $\sigma \in D_{8}$ and octagon $O$ in $P_{n}^{*}$, $(x,\chi)$ and 
$\sigma(x,\chi)$ have the same color in the preceding step. The same is true of 
$\intr(O \setminus \mathcal{O})$ and $\sigma(\intr(O \setminus \mathcal{O}))$ 
for unshared octagons, and of $(x,\chi')$ and $\sigma(x,\chi')$ for shared 
octagons. Thus, for any pair $O$ and $\sigma(O)$, the octagons will be colored 
such that the red pieces are invariant under $\sigma$. From this, 
$\sym(R_{n})=D_{8}$ and it follows that $\sym(F_{n})=D_{8}$.

We now prove that $F_{n}$ satisfies S2 and S3. Let $O$ in $P_{n}^{*}$ with center 
$x$ and consider all $\mathbf{v}+O$, $\mathbf{v} \in \Gamma$, such that 
$\mathbf{v}+O$ is also in $P_{n}^{*}$. Then, exactly one of the following 
is true:

	\begin{enumerate}[(1.)]
	\item There exists a unique octagon $\mathcal{O}$ in $P_{n-1}^{*}$ such 
	that for all such $\mathbf{v}$, $\mathbf{v}+\mathcal{O}$ is the unique octagon 
	in $P_{n-1}^{*}$ having $\mathbf{v}+x$ as a vertex. For example, in 
	Figure~\ref{fig:TranslatesOct}, for each $\mathbf{v}$ such that $\mathbf{v}+A$ 
	is in $P_{3}^{*}$, the center of $\mathbf{v}+A$ is a vertex of 
	$\mathbf{v}+\mathcal{O}$ and of no other octagon in $P_{2}^{*}$. 
	\item There exist exactly two octagons $\mathcal{O}$ and $\mathcal{O}'$ in 
	$P_{n-1}^{*}$ such that for all such $\mathbf{v}$, $\mathbf{v}+x$ is a vertex 
	shared by $\mathbf{v}+\mathcal{O}$ and $\mathbf{v}+\mathcal{O}^{*}$ in 
	$P_{n-1}^{*}$. In Figure~\ref{fig:TranslatesOct}, for each $\mathbf{v}$ such 
	that $\mathbf{v}+B$ is in $P_{3}^{*}$, the center of $\mathbf{v}+B$ is a 
	shared vertex of $\mathbf{v}+\mathcal{O}$ and $\mathbf{v}+\mathcal{O}_{B}$.
	\item There exist octagons $\mathcal{O}$ and $\mathcal{O}'$ such that for all 
	such $\mathbf{v}$, $\mathbf{v}+x$ is a vertex of $\mathbf{v}+\mathcal{O}$ or 
	$\mathbf{v}+\mathcal{O}^{*}$ in $P_{n-1}^{*}$, but there exists $\mathbf{v}^{*}$ 
	such that exactly one of $\mathbf{v}^{*}+\mathcal{O}$ and 
	$\mathbf{v}^{*}+\mathcal{O}'$ is in $P_{n-1}^{*}$. In 
	Figure~\ref{fig:TranslatesOct}, note that the center of $C$ is shared by 
	$\mathcal{O}$ and $\mathcal{O}_{C}$ but the center of $\vect{-1}{0}+C$ is an 
	unshared vertex of $\vect{-1}{0}+\mathcal{O}$. 
	\end{enumerate}

	\begin{figure}[ht]
	\begin{center}
	\includegraphics[scale=0.6]{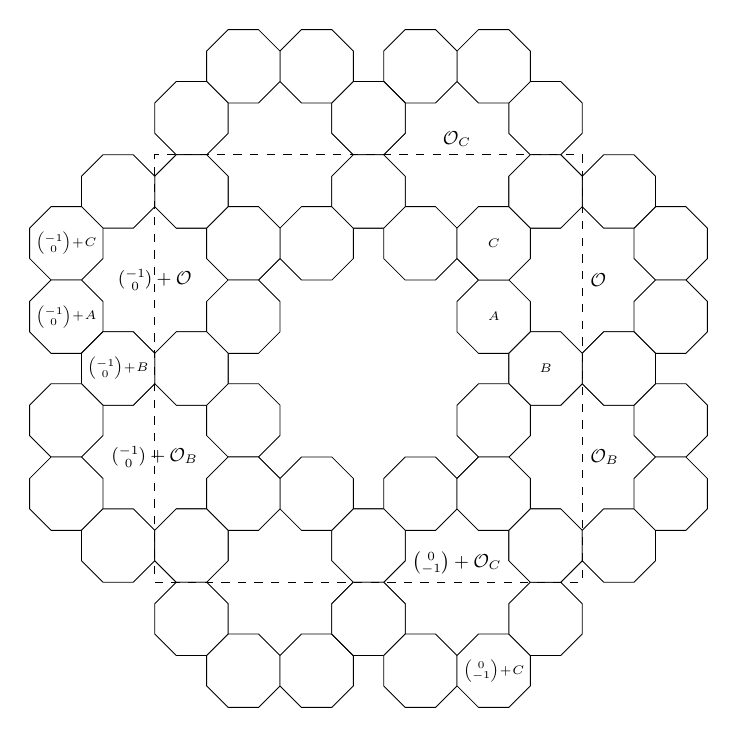} 
	\caption{Selected octagons with their translates in $P_{3}^{*}$.}
	\label{fig:TranslatesOct}
	\end{center}
	\end{figure}

We consider each case.

	\begin{enumerate}[(1.)]
	\item By S2 and S3, each of the associated $(x,\chi)$ and 
	$\intr(O \setminus \mathcal{O})$ has exactly one translate that is red in 
	the previous step. Thus, in the current step, each of $\mathcal{C}_{v}$ and 
	$\cl\left(O \setminus \mathcal{C}_{v}\right)$ will have exactly one translate 
	that will be colored red.
	\item Similarly, each of the associated $(x,\chi)$ and $(x,\chi')$ has exactly 
	one translate that is red in the previous step. 
	\item Consider $\mathbf{v}_{1}$ and $\mathbf{v}_{2}$ such that 
	$\mathbf{v}_{1}+\mathcal{O}$ and $\mathbf{v}_{2}+\mathcal{O}'$ are in 
	$P_{n-1}^{*}$, as in Figure~\ref{fig:OctCase3A}. Here, 
	$\mathbf{v}_{1}+\mathcal{O}'$ and $\mathbf{v}_{2}+\mathcal{O}$ are not 
	necessarily in $P_{n-1}^{*}$. Let $\chi$ and $\chi '$ be the centers of 
	$\mathcal{O}$ and $\mathcal{O}'$, respectively. Let $v$ and $v'$ be the vertices 
	of $O$ such that $\mathbf{v}_{1}+v$ lies on $\mathbf{v}_{1}+(x,\chi)$ and 
	$\mathbf{v}_{2}+v'$ lies on $\mathbf{v}_{2}+(x,\chi ')$. We note that 
	$\mathbf{v}_{1}+(x,v') \subseteq \mathbf{v_{1}}+\intr(O \setminus \mathcal{O})$. 
	Thus, either there is exactly one translate of $\mathcal{O}'$ such that the 
	slice containing the corresponding translate of $(x,v')$ is red, or there is 
	exactly one translate of $\intr(O \setminus \mathcal{O})$ that is contained 
	in a red slice of some octagon in an earlier step. This implies that 
	$\cl\left(O \setminus \mathcal{C}_{v}\right)$ will have exactly one translate 
	that will be colored red. Similarly, exactly one translate of 
	$\mathcal{C}_{v}$ will be colored red.

	\begin{figure}[ht]
	\begin{center}
	\includegraphics[scale=0.8]{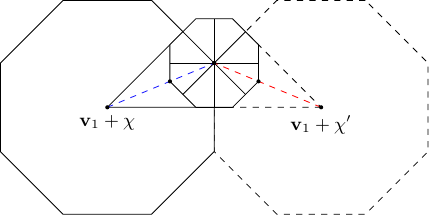} \qquad
	\includegraphics[scale=0.8]{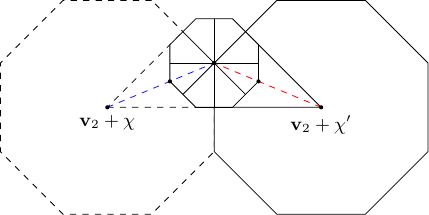}
	\caption{The case when a translate of an octagon in $P_{n-1}^{*}$ is not 
	necessarily in $P_{n-1}^{*}$.}
	\label{fig:OctCase3A}
	\end{center}
	\end{figure}

	\end{enumerate}

This shows that for any octagon $O$ in $P_{n}^{*}$ with center $x$, exactly one 
translate of each of  $\displaystyle\bigcup_{i=0}^{3} O \cap \cone(x;e_{2i},e_{2i+1})$ 
and $\displaystyle\bigcup_{i=0}^{3} O \cap \cone(x;e_{2i+1},e_{2i+2})$ will be 
colored red. Then, $R_{n} + \Gamma = P_{n}$ and 
$\intr(R_{n}) \cap \intr(\mathbf{v}+R_{n}) = \varnothing$ 
for any nontrivial $\mathbf{v}$. It follows that $F_{n}$ satisfies S2 and S3. 
In Figure~\ref{fig:Step34OctCentral}, we illustrate the third and fourth 
construction steps. 

	\begin{figure}[ht]
	\begin{center}
	\includegraphics[scale=0.55]{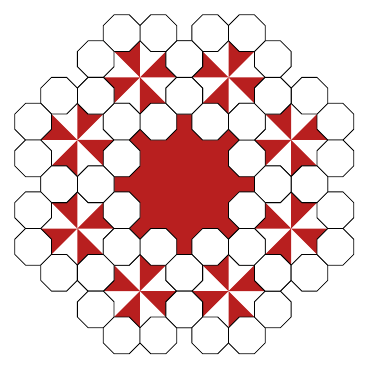} \quad
	\includegraphics[scale=0.55]{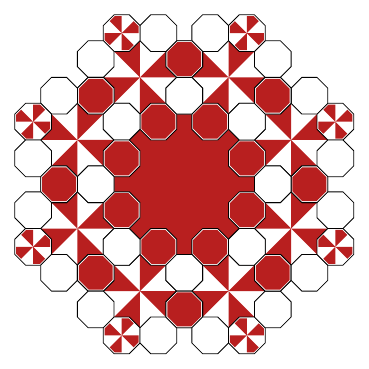}\quad
	\includegraphics[scale=0.55]{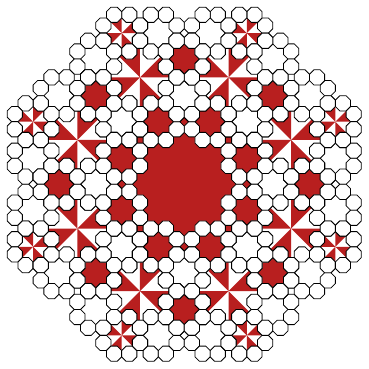} \quad
	\includegraphics[scale=0.55]{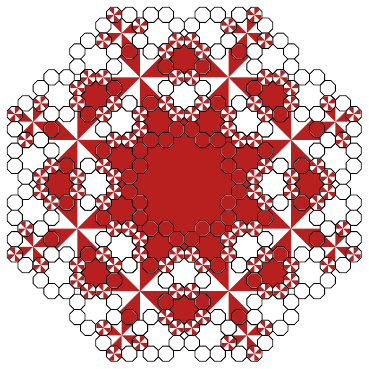}
	\caption{Third and fourth steps of construction of a highly symmetric 
	fundamental domain for $\mathbb{Z}^{2}$.}
	\label{fig:Step34OctCentral}
	\end{center}
	\end{figure}

We note that $F_{n}$ is compact, as it is closed by definition, and 
$\diam(F_{n})=  \dsum_{k = 1}^{n} h\ell^{k-1}$, where 
$h=2\sqrt{1-\frac{\sqrt{2}}{2}}$. Furthermore, in the Hausdorff metric-equipped 
space $H(\mathbb{R}^{2})$ of non-empty compact subsets of $\mathbb{R}^{2}$, the 
sequence $\set{F_{n}}_{n=1}^{\infty}$ is Cauchy, seeing that the distance between 
$F_{n}$ and $F_{m}$ for $n>m$ is $\dsum_{k = m+1}^{n} h\ell^{k-1}$. Here, it is 
also important to note that for an octagon $O$ with center $x$ in $P_{n}^{*}$, 
by construction, $\displaystyle\bigcup_{i=n+1}^{\infty}P_{i}^{*}$ will not cover 
$O$, with $O \setminus \displaystyle\bigcup_{i=n+1}^{\infty}P_{i}^{*}$ being a 
connected set containing the ball centered at $x$ with radius 
$1-\frac{\sqrt{2}}{{2}}$ times that of $O$. This means that 
$O \setminus \displaystyle\bigcup_{i=n+1}^{\infty}P_{i}^{*}$ will not be 
affected by any succeeding step. Because this is true for any octagon at any 
stage, we find that in the limit, the union of the portion of each  
$O \setminus \displaystyle\bigcup_{i=n+1}^{\infty}P_{i}^{*}$ that overlaps with 
the unit square fundamental domain covers this unit square up to a set of 
measure zero. From this, the boundary of the limit $F_{\square}$ of 
$\set{F_{n}}_{n=1}^{\infty}$ in $H(\mathbb{R}^{2})$ has measure zero.

We conclude that $F_{\square}$ is a fundamental domain for $\Gamma$ with the 
desired symmetry group $D_{8}$. See Figure~\ref{fig:D8forSquare} for the tile 
$F$ together with some of its translates.

\begin{figure}[ht]
\epsfig{file=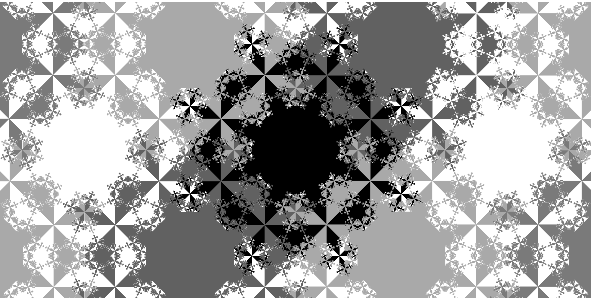, width=130mm}
\caption{The fundamental domain $F_{\square}$ (black) of $\Z^2$ and some
  of its copies, illustrating how they form a tiling \label{fig:D8forSquare}.} 
\end{figure}

\end{proof}

\begin{prop}[Elser] \label{zwei}
The hexagonal lattice $A_2$ has a compact fundamental domain
$F_{\triangle}$ with $S(F_{\triangle}) = D_{12}$. 
\end{prop}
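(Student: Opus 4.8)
The plan is to run the octagon construction of Proposition~\ref{eins} with regular dodecagons in place of regular octagons and the hexagonal lattice $A_2$ in place of $\Z^2$. Since the point group of $A_2$ is $D_6$, and $D_6$ is an index-$2$ subgroup of $D_{12}$ (the rotations by multiples of $\pi/3$ together with six of the twelve reflections of the regular $12$-gon), it suffices to produce a compact fundamental cell of $A_2$ whose symmetry group contains $D_{12}$; as in Proposition~\ref{eins}, the construction will make clear that it contains nothing more.

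First I would fix the initial packing $P_0$. Choose the edge length $\ell = 2-\sqrt{3}$ so that regular dodecagons of edge length $\ell$, placed with one centre at each point of a suitable similar copy of $A_2$, form an edge-to-edge packing of $\R^2$ whose holes are the small equilateral triangles of the Archimedean pattern $3.12^2$; here $2-\sqrt 3 = 1/(2+\sqrt 3)$ is the reciprocal of the width-to-edge ratio of a regular dodecagon. Then the intersection of two dodecagons of $P_0$ is empty or a full edge, and every vertex of $P_0$ is a vertex of exactly two dodecagons and one triangular hole. Colour the dodecagon centred at $0$ red --- call it $F_0$ --- and all others white. For $n\ge 1$ let $P_n$ be the packing obtained by placing on every vertex of every dodecagon of $P_{n-1}$ a translate of the regular dodecagon of edge length $\ell^{\,n}$; the scaling factor $\mu = \ell = 2-\sqrt3$ is exactly the one for which these smaller dodecagons again pack edge-to-edge, one on each vertex. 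Colour a dodecagon of $P_n$ white (resp.\ red) if its centre is a vertex of two white (resp.\ two red) dodecagons of $P_{n-1}$; and if its centre $x$ is simultaneously a vertex of one red and one white dodecagon of $P_{n-1}$, cut it into twelve pieces $O_0,\dots,O_{11}$ by the twelve cones $\cone(x; v_i, v_{i+1})$ spanned by twelve unit vectors $v_0,\dots,v_{11}$ at consecutive angles $\pi/6$, and colour $O_0,O_2,\dots,O_{10}$ one colour and $O_1,O_3,\dots,O_{11}$ the other, the assignment being determined by which of the two adjacent larger dodecagons is the red one --- this rule being well defined for exactly the same combinatorial reason as in Proposition~\ref{eins}, since the two larger neighbours of such a vertex occupy a fixed relative position in the $3.12^2$ pattern. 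With $R_n$ the union of the red pieces arising at step $n$, put $F_n := \cl\big((F_{n-1}\setminus P_n)\cup R_n\big)$, and let $F_\triangle$ be the closure of $\lim_{n\to\infty}F_n$.

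The rest is the formal argument of Proposition~\ref{eins}, verbatim in structure. By construction each $F_n$ has $D_{12}$-symmetry about $0$ (because $F_0$ does, and every step decorates the previous set $D_{12}$-equivariantly), hence so does $F_\triangle$, and, as in the octagon case, $S(F_\triangle) = D_{12}$. For every $n$, $A_2 + F_n$ is a packing whose holes cover a fraction $c\,\mu^{\,n}$ of the plane, where $c$ is the fraction covered by the triangular holes of $P_0$; hence the holes shrink to measure zero. That $F_n$ and $x+F_n$ are non-overlapping for $x\in A_2\setminus\{0\}$ follows, as before, from the facts that the red and white parts are complementary and that rerunning the construction with any other dodecagon chosen red produces a set overlapping no white part, in particular not overlapping $F_n$. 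Therefore $\lim_n (A_2+F_n)$ is a tiling of $\R^2$, so $F_\triangle$ is a fundamental cell of $A_2$ once it is known to be compact; and it is, because its diameter is bounded by the convergent geometric series $\sum_{n\ge 0}\diam(F_0)\,\mu^{\,n} = \diam(F_0)/(\sqrt3-1)$.

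The step I expect to require the most care, exactly as in the octagon case, is the well-definedness and $D_{12}$-equivariance of the alternating-colour rule together with the packing property of $A_2 + F_n$: one must check that the $3.12^2$-type geometry is rigid enough that a small dodecagon sitting on a red/white vertex always sees its two larger neighbours in the same relative position, so that the twelve-sector decomposition and the red/white assignment are unambiguous and compatible with the $D_{12}$ action fixing the origin. Once this is in place, the limiting argument is identical to Proposition~\ref{eins}.
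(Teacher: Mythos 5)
Your proposal follows essentially the same route as the paper, which itself only sketches this proof by analogy with Proposition~\ref{eins}: a $3.12^2$-type packing of dodecagons of edge length $2-\sqrt3$ centred on $A_2$, iterated rescaled dodecagons on vertices with the alternating red/white sector rule, and the same limiting/compactness argument. Your version is in fact more detailed than the paper's sketch, and the constants (scaling factor $2-\sqrt3$, geometric-series diameter bound) check out.
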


\begin{proof}[Proof (sketch)]

The hexagonal lattice case is analogously treated. The plane is first packed 
by dodecagons inscribed in hexagonal fundamental domains for the hexagonal 
lattice $A_{2}$ defined by $\begin{pmatrix} 1 \\ 0 \end{pmatrix}$ and 
$\begin{pmatrix} \cos \frac{\pi }{3} \\ \sin \frac{\pi }{3} \end{pmatrix}$ 
(corresponding to the Archimedean tiling $3.12^2$ by triangles and 
dodecagons). Further generations are obtained by placing dodecagons centered 
at the vertices of the dodecagons of the preceding generation. Here, the 
``holes'' of the union of the first $n$ packings are always equilateral 
triangles, and these also vanish eventually in the progression.  A central 
dodecagon and the dodecagons arising from it are considered, and subsets are 
taken at each step analogously as in the coloring procedure described for 
the square lattice case.  See Figure~\ref{fig:D12forHexagon}.

	\begin{figure}
	\parbox[c]{81mm}{\epsfig{file=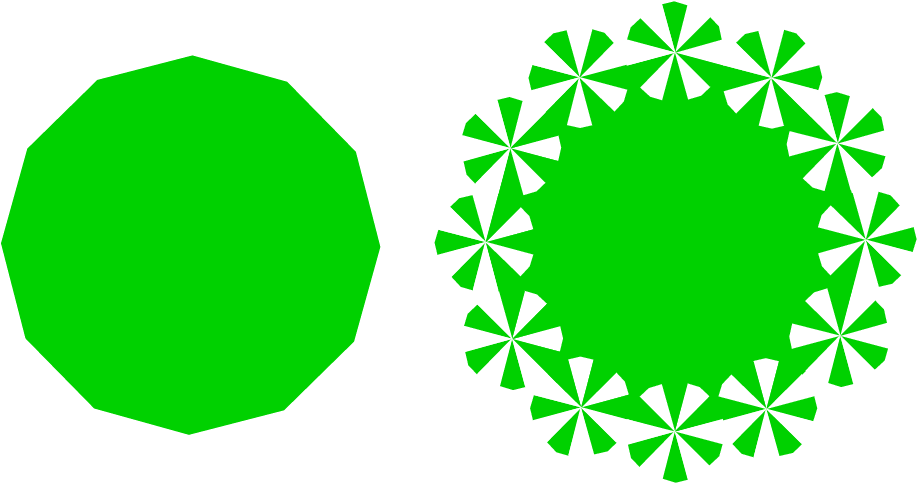,width=80mm}}
	\parbox[c]{51mm}{\epsfig{file=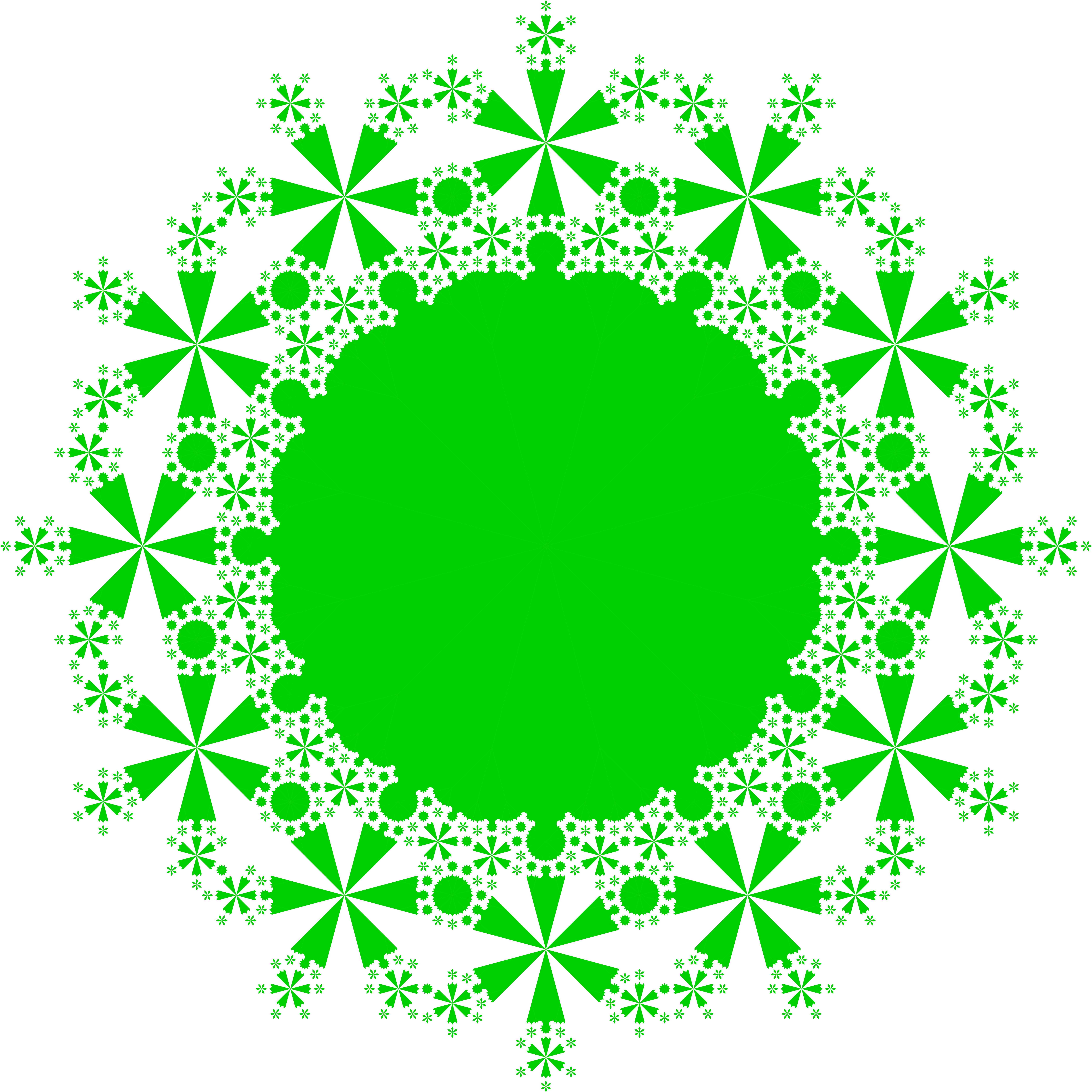, width=50mm}}
	\caption{The first two iterates of the construction of the 12-fold
	  fundamental domain $F_{\triangle}$ of the hexagonal lattice $A_2$
	  (left and middle), and a higher iterate (right). \label{fig:D12forHexagon}}
	\end{figure}

Interestingly, the set $F_{\triangle}$ appears in an entirely
different context in \cite{bks} and \cite{coc}, where it serves
as a {\em window} respectively {\em atomic surface} for 
mathematical quasicrystals. The tiling property of $F_{\triangle}$
is not mentioned in these texts, and not obvious from the constructions
used there.

\end{proof}

Let us now prove Theorem \ref{thm:satz1}.

\begin{proof}[Proof (of Theorem \ref{thm:satz1})]
We consider the four cases where $\Gamma$ is an oblique lattice, 
a square lattice, a hexagonal lattice, or a rectangular lattice.

{\bf Case 1:} Let $\Gamma$ be an oblique lattice.
Without loss of generality one basis of $\Gamma$ is $b_1 =  \big(
\begin{smallmatrix}   x \\ 0 \end{smallmatrix} \big), \; b_2=
\big( \begin{smallmatrix} y   \\ z \end{smallmatrix} \big)$, 
$z \ne 0$. 
Then, let $F$ be the rectangle with vertices 
$\big( \begin{smallmatrix} x/2 \\ z/2 \end{smallmatrix} \big), \,  
\big( \begin{smallmatrix} -x/2 \\ z/2 \end{smallmatrix} \big), \, 
\big( \begin{smallmatrix} -x/2 \\ -z/2 \end{smallmatrix} \big), \, 
\big( \begin{smallmatrix} x/2 \\ -z/2 \end{smallmatrix} \big)$, 
see Figure \ref{fig:ob}.  
 
\begin{figure}[ht!]
\begin{center}
\includegraphics[scale=1]{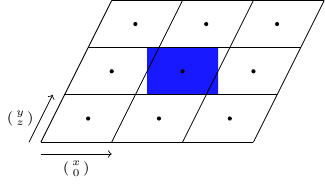}
\caption{A fundamental domain for an oblique lattice, with $D_2$-symmetry.}
\label{fig:ob}
\end{center}
\end{figure} 
 
It is easy to see that $\Gamma+F = \R^2$, and the copies of
$F$ do not overlap. Thus $F$ is a fundamental domain of $\Gamma$. 
We have $P(\Gamma)=C_2$ and $S(F)=D_2$, unless $x=z$, in which case $S(F)=D_4$. 
In the last case, one can use a rectangle with one edge having the same length
as and parallel to $\big( \begin{smallmatrix} y \\ z \end{smallmatrix} \big)$ 
and suitable perpendicular edge length to obtain $S(F)=D_{2}$.

{\bf Case 2:} $\Gamma = \Z^2$: This is Proposition \ref{eins}. 
We have $P(\Gamma)=D_4$ and $S(F_{\square})=D_8$. 

{\bf Case 3:} $\Gamma = A_2$: This is Proposition \ref{zwei}. 
We have $P(\Gamma)=D_6$ and $S(F_{\triangle})=D_{12}$.

{\bf Case 4:} The algorithm to be presented for the rectangular lattice case is 
for the most part analogous to those of the square and hexagonal cases. The idea 
is to cover more and more of the plane such that squares of a current generation 
are centered at the vertices of those of the previous generation. The sequences 
of sizes of the squares remain nondecreasing,  but this time the squares are 
allowed to be of the same size as those in the previous generation. Thus we 
distinguish between ``iterations'' and ``steps''. One step consists of one or 
more iterations where squares of the same size are used, and squares in a current 
step are smaller than those in the previous step.

Let $\Gamma$ be a rectangular lattice with basis vectors 
$\vect{b}{0}$, $\vect{0}{a}$, with $b<a$. Let $r_{0}=a$, $r_{1}=b$, 
$v_{1}=\left\lfloor\frac{r_{0}}{r_{1}}\right\rfloor$, $r_{2}=r_{0}-v_{1}r_{1}$. 
The first step shall consist of $v_{1}$ iterations, and squares of 
edge length $r_{1}$ will be used. Let $P_{1,1}$ be the packing by squares of edge 
length $b$ centered at each lattice point. If $v_{1}=1$, then the next packing 
belongs to the second step. Otherwise, for each $k$ from $2$ to $v_{1}$, let 
$P_{1,k}$ be the collection of squares of edge length $r_{1}$ centered at the 
vertices of the squares of $P_{1,k-1}$.

If $r_{2}=0$, then there are no further steps to consider. Otherwise, the portion 
of the plane that remains uncovered by the squares may be viewed as a union of 
$r_{1}/2 \times r_{2}/2$ rectangles.  If $r_{1} \times r_{1}$ squares are placed 
at the vertices of the current packing, then there would be nontrivial overlapping 
regions. Thus, smaller squares will be used. 

In general, suppose $r_{j} \neq 0$, where $j \geq 2$, and the uncovered portions 
are $r_{j-1}/2 \times r_{j}/2$ rectangles. Let 
$v_{j}=\left\lfloor\frac{r_{j-1}}{r_{j}}\right\rfloor$, 
$r_{j+1}=r_{j-1}-v_{j}r_{j}$.  For each $k$ from $1$ to $v_{j}$, let $P_{j,k}$ 
be the collection of squares of edge length $r_{j}$ centered at the vertices of 
the squares in $P_{j,k-1}$, where we define $P_{j,0}$ naturally as 
$P_{j-1,v_{j-1}}$. After the $j$th step, the uncovered portion of the plane, 
if any, consists of $r_{j}/2 \times r_{j+1}/2$ rectangles. Note that the sequence 
of $r_{j}$'s is the output of the Euclidean algorithm. Thus, $r_{n+1}=0$ for some 
$n$ if and only if $\frac{a}{b} \in \mathbbm{Q}$, and the process terminates after 
step $n$ in this case. In the case that $\frac{a}{b} \in \mathbbm{Q}'$, the 
sequence of $r_{j}$'s converges to $0$, so that the process also produces squares 
that cover the plane in the limit. 

As before, let $P_{1,1}^{*}$ be the square of edge length $r_{1}$ centered at the 
origin, and for each $j$, $k$, let $P_{j,k}^{*}$ consist of the squares of $P_{j,k}$ 
centered at the vertices of $P_{j,k-1}^{*}$. Again, $P_{j,k}^{*} + \Gamma = P_{j,k}$, 
$\displaystyle\bigcup_{j} \bigcup_{k=1}^{v_{j}} P_{j,k}^{*} + \Gamma = 
\displaystyle\bigcup_{j}\bigcup_{k=1}^{v_{j}} P_{j,k}$, and 
$\sym(P_{j,k}^{*})=D_{4}$ for all $j$, $k$. We will construct for each $j$, $k$ a 
subset $F_{j,k}$ of $P_{j,k}^{*}$ satisfying the following properties:

	\begin{enumerate}
	\item[R1.] For every $j$, $k$, $\sym(F_{j,k})=D_{4}$.
	\item[R2.] For every $j$, $k$, 
	$F_{j,k}+\Gamma=\displaystyle\bigcup_{j}\bigcup_{k=1}^{v_{j}}P_{j,k}$.
	\item[R3.] For every $j$, $k$ and any nontrivial $\mathbf{v} \in \Gamma$, 
	$\intr(F_{j,k}) \cap \intr(\mathbf{v}+F_{j,k}) = \varnothing$.
	\item[R4.] The sequence $\set{F_{j,k}}$ is Cauchy in $H(\R^{2})$.
	\end{enumerate}
	
Color $P_{1,1}^{*}$ red and let $R_{1,1}=P_{1,1}^{*}$ and $F_{1,1}=R_{1,1}$. 
This region satisfies R1, R2, and R3. Similarly, any square $O$ with center $x$ 
will be colored purely red, colored purely white, or divided into four congruent 
slices $S_{i} = O \cap \cone(x;e_{i},e_{i+1})$ for $i=0,1,2,3$ and have two 
non-adjacent slices colored red. Here, 
$e_{i} = \begin{pmatrix} \cos \frac{\pi i}{2} \\ \sin \frac{\pi i}{2} \end{pmatrix}$. 
The union of the red pieces at the $k$th iteration of the $j$th step will be 
denoted by $R_{j,k}$, and $F_{j,k}$ is taken to be 
$\cl\left[(F_{j,k-1}\setminus P_{j,k}^{*})\cup R_{j,k}\right]$.

This time, if a square $O$ in $P_{j,k}^{*}$ has center $x$, then $x$ is the 
vertex of either one, two, or four squares in $P_{j,k-1}^{*}$. For the first two 
cases, we apply the natural analogues of the procedures for the square and 
hexagonal lattices. In the last case, for every vertex $v$ of $O$ and every 
square $\mathcal{O}$ having $x$ as vertex, color the slice of $O$ containing 
$(x,v)$ according to the color of $(x,\chi)$ in the previous iteration, where 
$\chi$ is the center of $\mathcal{O}$. In simpler 
terms, the slices of $O$ retain the way they are colored previously.

The fact that $\sym(F_{j,k})=D_{4}$ for all $j$, $k$ is proved similarly as 
in the previous cases. We now prove that $F_{j,k}$ satisfies R2 and R3. 
Let $O$ be in $P_{j,k}^{*}$ with center $x$ and consider all  
$\mathbf{v} \in \Gamma$ such that $\mathbf{v}+O$ is also in $P_{j,k}^{*}$. 
Then, exactly one of the following is true:

	\begin{enumerate}[(1.)]
	\item There exist either one, two, or four squares in $P_{j,k-1}$ such 
	that for all such $\mathbf{v}$, $\mathbf{v}+x$ is a vertex of the 
	translates by $\mathbf{v}$ of the one, two, or four squares, respectively, 
	and only of these squares.
	\item There exist squares $\mathcal{O}$ and $\mathcal{O}'$ such that for 
	all such $\mathbf{v}$,  $\mathbf{v}+x$ is a vertex of $\mathbf{v}+\mathcal{O}$ 
	or $\mathbf{v}+\mathcal{O}^{'}$ in $P_{j,k-1}^{*}$, but there exists 
	$\mathbf{v}^{*}$ such that exactly one of $\mathbf{v}^{*}+\mathcal{O}$ and 
	$\mathbf{v}^{*}+\mathcal{O}'$ is in $P_{j,k-1}^{*}$. 
	\item There exist four squares $\mathcal{O}_{1}$, $\mathcal{O}_{2}$, 
	$\mathcal{O}_{3}$, $\mathcal{O}_{4}$ such that for all such $\mathbf{v}$, 
	$\mathbf{v}+x$ is a vertex in $P_{j,k-1}^{*}$ of either the pair 
	$\mathbf{v}+\mathcal{O}_{1}$ and $\mathbf{v}+\mathcal{O}_{2}$, or 
	$\mathbf{v}+\mathcal{O}_{3}$ and $\mathbf{v}+\mathcal{O}_{4}$, or both pairs, 
	but there exists $\mathbf{v}^{*}$ such that exactly one pair has a translate 
	by $\mathbf{v}^{*}$ in $P_{j,k-1}^{*}$.
	\end{enumerate}

The first two cases are dealt with analogously as in the square and hexagonal cases. 
As for the third case, if $\mathbf{v}+x$ is a vertex of 
$\mathbf{v}+\mathcal{O}_{3}$ and $\mathbf{v}+\mathcal{O}_{4}$ but not of 
$\mathbf{v}+\mathcal{O}_{1}$ and $\mathbf{v}+\mathcal{O}_{2}$, then 
$\mathbf{v}+\mathcal{O}_{1}$ and $\mathbf{v}+\mathcal{O}_{2}$ must be in an 
``extreme'' portion of $P_{j,k-1}^{*}$, that is, in an extreme vertical or extreme 
horizontal position. Moreover, in $P_{j,k-1}^{*}$, either $k-1 \geq 2$ or $k-1=0$ 
and $P_{j,k-1}^{*}=P_{j-1,v_{j-1}}^{*}$ where $v_{j-1} \geq 2$. That is, 
$P_{j,k}^{*}$ comes after an iteration that is at least the second in its step. 
And because $\mathbf{v}+x$ is a shared vertex, it is not a ``corner'' of 
$P_{j,k-1}^{*}$. Thus, $\mathbf{v}+O$ will be colored purely white because from 
the coloring procedure, among the squares in the extreme of $P_{j,k-1}^{*}$, 
only the squares centered at the corners of $P_{j,k-1}^{*}$ will have red portions. 
This means that the non-corner squares in the extreme may be ignored, and the case 
is reduced to the first case where each translate of $O$ is shared by the 
corresponding translates of four squares. See for example 
Figure~\ref{fig:Shared2or4}.

	\begin{figure}[ht]
	\begin{center}
	\includegraphics[scale=0.85]{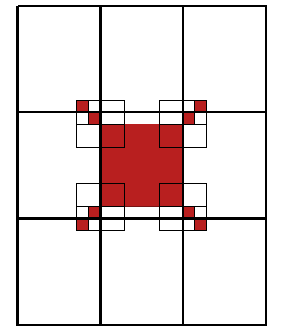}\quad
	\includegraphics[scale=0.85]{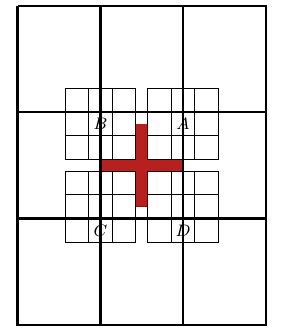}\quad
	\includegraphics[scale=0.85]{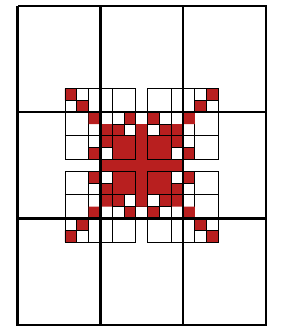}
	\caption{Squares $A$, $B$, $C$, and $D$ are equivalent under the action of 
	$\Gamma$, with $A$ and $B$ shared by four squares and $C$ and $D$ shared by 
	two squares.}
	\label{fig:Shared2or4}
	\end{center}
	\end{figure}

We note that the limit $F$ is compact even if $\frac{a}{b}$ is irrational, 
as the horizontal length of $F$ is $\displaystyle\sum_{j=1}^{\infty} v_{j}r_{j}  = 
\displaystyle\sum_{j=1}^{\infty} r_{j-1} - r_{j+1} =  
\displaystyle\lim_{j \rightarrow \infty} (r_{0}+r_{1}-r_{j}-r_{j+1}) = (a+b)$. 
If $\frac{a}{b}$ is rational, there exists a minimal $m$ such that $r_{m+1}=0$, 
and the horizontal length of $F$ is 
$\displaystyle\sum_{j=1}^{m} v_{j}r_{j} = a+b-r_{m}$. 
We thus conclude as in the square and hexagonal cases. Shown in 
Figure~\ref{fig:RecCons} are the steps in constructing a fundamental domain with 
the desired symmetry group for a particular rectangular lattice. 
Figure~$\ref{fig:RecTil}$ illustrates the tiling induced 
by the fundamental domain.

	\begin{figure}[ht]
	\begin{center}
	\includegraphics[scale=0.8]{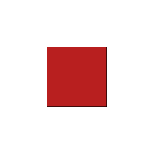}\quad
	\includegraphics[scale=0.8]{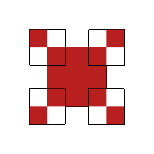}\quad
	\includegraphics[scale=0.8]{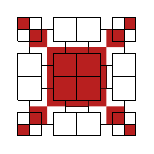}\quad
	\includegraphics[scale=0.8]{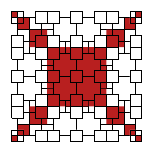}\quad
	\includegraphics[scale=0.8]{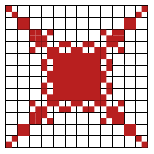}
	\caption{Construction of a $D_{4}$-symmetric fundamental domain for a 
	rectangular lattice with $a=8$, $b=5$.}
	\label{fig:RecCons}
	\end{center}
	\end{figure}

\begin{figure}[ht]
	\begin{center}
	\includegraphics[scale=1]{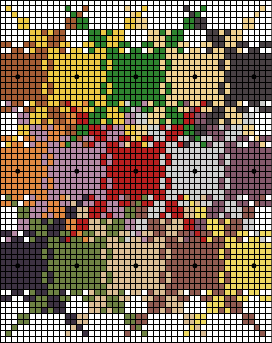}
	\caption{Tiling by a $D_{4}$-symmetric fundamental domain for a 
	rectangular lattice.}
	\label{fig:RecTil}
	\end{center}
	\end{figure}
\end{proof}

It is not obvious how to devise some similar general construction
of a fundamental domain for any rhombic lattice. 

\section{\bf Dimension 3} \label{sec:d3}

Similar to the proof of Theorem \ref{thm:satz1}, the proof of 
Theorem~\ref{thm:satz2} consists of considering all possible cases. Fortunately, 
we can utilize Theorem~\ref{thm:satz1} to cover most cases:  Create a right 
prism having one of the two-dimensional fundamental domains as base and with 
an appropriately chosen height. Then stack copies of this prism such that 
there is one such solid centered at each three-dimensional lattice point.  

In $\R^3$ there are 32 finite groups of Euclidean motions obeying the 
crystallographic restriction in Proposition~\ref{prop:crystrest}, 
see \cite{cox}, Section 15.6. Only seven of
them occur as point groups of lattices. Table \ref{tab1} summarises
the situation. The second column contains the name of the lattice, 
more precisely: the name of the family of lattices with a common symmetry 
group (the names as being used in crystallography). The third column 
contains the point group of the lattice in orbifold notation, the 
fourth column contains the order of the point group. 
The last column indicates the two-dimensional fundamental domain of 
Theorem~\ref{thm:satz1} which yields a three-dimensional fundamental
domain $F$ for the current three-dimensional lattice, and the order
$|S(F)|$ in parentheses.

\begin{table}[b]
% \footnotesize
\begin{tabular}{|l|l|c|c|c|}
\hline
Nr & Name & Point group & Order & 2-dim fundamental domain\\
& &     & & (number of symmetries $|S(F)|$) \\
\hline \hline
1 & $\Z^3$ & $\ast432$  & 48 & ---\\
2 & body centered cubic & $\ast432$ & 48 & ---\\
3 & face centered cubic & $\ast432$ & 48 & ---\\
\hline
4 & Hexagonal & $\ast622$ & 24 & 12fold (48)\\
\hline
5 & Tetragonal primitive & $\ast422$ & 16 & 8fold (32)\\
6 & Tetragonal body-centered & $\ast422$ & 16 & 8fold (32)\\
\hline
7 & Rhombohedral & $2\ast3$ & 12 & 6fold (24) / 12fold(48)\\ 
\hline
8 & Orthorhombic primitive & $\ast222$ & 8 & 4fold (16)\\
9 & Orthorhombic base-centered & $\ast222$ & 8 & 4fold (16)\\
10 & Orthorhombic body-centered & $\ast222$ & 8 & 4fold (16)\\
11 & Orthorhombic face-centered & $\ast222$ & 8 & 4fold (16)\\
\hline
12 & Monoclinic primitive & $2\ast$ & 4 & 2fold (8)/4fold(16)\\
13 & Monoclinic base-centered & $2\ast$ & 4 & 2fold (8)/4fold(16)\\
\hline
14 & Triclinic primitive & $2$ & 2 & mon.(4) / 2fold (8)\\
\hline
\end{tabular}
\caption{The 14 Bravais types of lattices and  their point groups. \label{tab1}}
\end{table}

Since the list of finite groups of Euclidean motions in $\R^3$
is known, we know that there is no such group containing 
the group $\ast432$ as a subgroup of index 2. (The only 
candidates---the ones of order 96---are the (non-primitive) groups 
$C_{96}, D_{48}$ and $C_2 \times D_{24}$, regarded as symmetry
groups of solids in $\R^3$.) 
The corresponding lattices are the so-called {\em cubic lattices}:
the {\em primitive cubic lattice} $\Z^3$, the {\em body centered cubic
lattice} $\Z^3 \cup \big(\Z^3 + (\frac{1}{2},\frac{1}{2},
\frac{1}{2})^T\big)$ (bcc) and the {\em face centered cubic lattice} 
(fcc). So we cannot expect to find fundamental domains for these three 
cubic lattices possessing a symmetry group that contains their point 
group $\ast 432$ as a proper subgroup of finite index.

%\section{\bf Application to short perfect matchings} \label{sec:match}

\begin{figure}
\includegraphics[width=130mm]{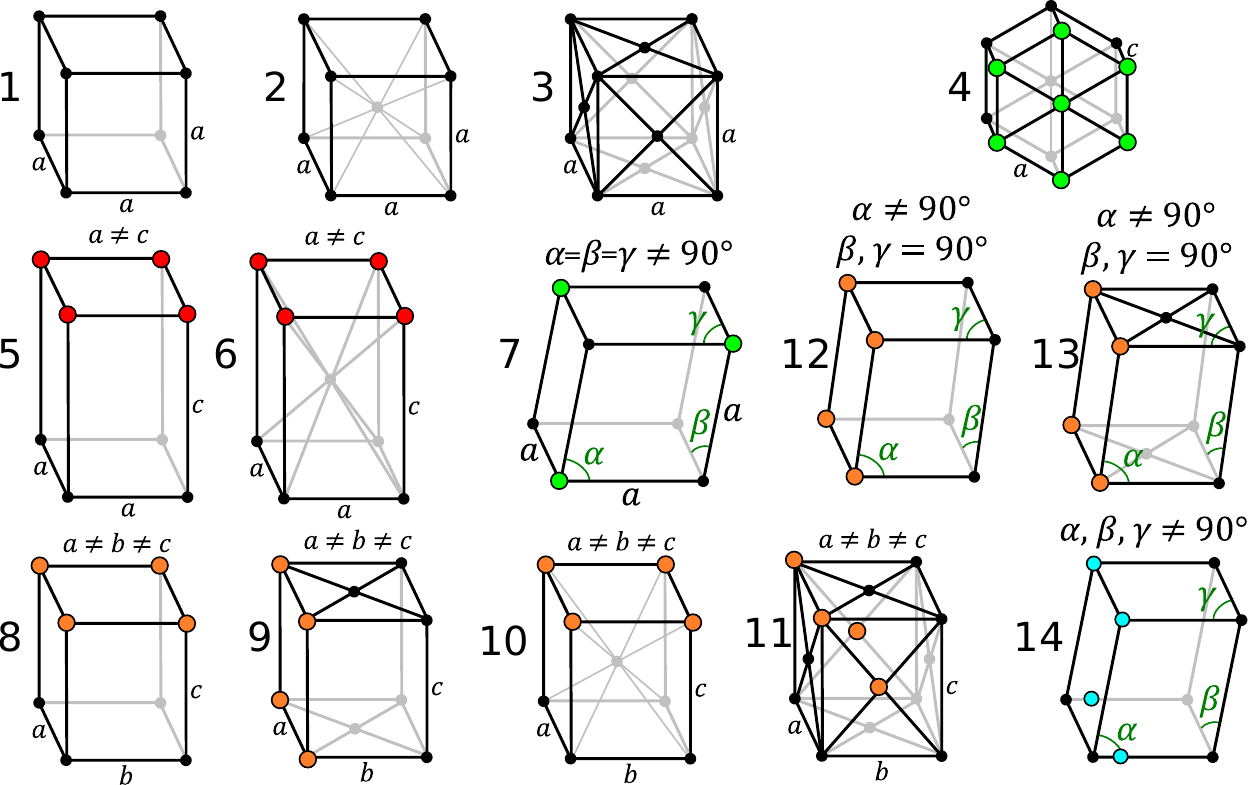}
\caption{Illustrations of the 14 Bravais types of lattices in $\R^3$.
The shaded nodes indicate how the lattices consist
of layers of two-dimensional lattices. Angles omitted 
in the figure are assumed to be $\pi/2$ (or $\pi/3$ in 4).
Edges labelled with equal letters are of equal length.
The image is taken from \cite{wik} and only slightly 
modified.\label{fig:3d}}
\end{figure}

\begin{proof}[Proof (of Theorem \ref{thm:satz2})]
We consider 6 cases (numbers 4-14 in Table \ref{tab1} and 
Figure \ref{fig:3d}, identified if they have equal point
groups). This will yield the entries of the last
column of Table \ref{tab1}, which shows the name of
the two-dimensional fundamental domain used, and the order
of the symmetry group of the corresponding three-dimensional 
fundamental domain (in parentheses).

{\bf Case 1:} Hexagonal (4). The lattice consists of equidistant
layers of hexagonal lattices. Attaching a thickened version
of the fundamental domain $F_{\triangle}$---say, $F:=F_{\triangle}
\times [-\ell/2,\ell/2]$, where $\ell$ is the distance between two 
adjacent layers---to each lattice point yields a tiling of $\R^3$. 
The symmetry group $S(F)$ of $F$ is $\ast12 \; 2$, the new
symmetries coming from rotating $F$ along an axis
which is parallel to the layers of hexagonal lattices about $\pi$ 
(``turning $F$ upside down''). 

{\bf Case 2:} Tetragonal (5-6). These lattices consist of equidistant 
layers of square lattices. Thus we can use the thickened version
of the fundamental domain $F_{\square}$ of the square lattice,
its symmetry being $\ast82$, having order 32.

{\bf Case 3:} Rhombohedral (7). This lattice consists of equidistant
layers of the hexagonal lattice $A_2$. So we can either use a 
thickened fundamental domain of $A_2$ with $D_6$-symmetry, yielding
a three-dimensional fundamental domain $F$ with $S(F)=\ast62$,
$|S(F)|=24$ and index $[S(F):P(\Gamma)]=2$. 
Or we can use a thickened version of $F_{\triangle}$ 
as in case 1, yielding a fundamental domain with $S(F)=\ast 12 \; 2$,
$|S(F)|=48$ and index $[S(F):P(\Gamma)]=4$. 

{\bf Case 4:} Orthorhombic (8-11). These lattices consist of equidistant 
layers of rectangular lattices. Thus we can use the thickened version
of the fundamental domain of the rectangular lattice,
its symmetry group being $\ast 42$ of order 16. The rectangular lattices are indicated 
in Figure \ref{fig:3d} by shaded points. 
%In number 11 the rectangular lattice is rather hard to spot.
%One may ask whether lattice number 11 does indeed consist of
%{\em equidistant} layers of rectangular lattices, but this is clear
%from the lattice property.

{\bf Case 5:} Monoclinic (12-13). These two lattices also consist 
of equidistant layers of rectangular lattices. Thus we can reason 
as in the preceding case. 

Alternatively, we can use rectangular cuboids as fundamental
domains, having symmetry group $\ast 222$ of order 8.

{\bf Case 6:} Triclinic (14). This lattice---or rather: these
lattices---consist of layers of oblique lattices. We may use
a right prism over a parallelogram as a fundamental domain. It has
symmetry group $2\ast$ of order 4. Or we may even use  
a cuboid (erected on the rectangles of Figure \ref{fig:ob}).
This yields a fundamental domain with symmetry group  
$\ast222$ of order 8.
\end{proof}

\section{\bf Conclusions and Outlook} \label{sec:concl}

%xxx good matchings for $(\Z^2, R_{\pi/4} \Z^2)$, $(A_2, R_{\pi/6}
%A_2)$, rectangular $(\Gamma, R_{\pi/2} \Gamma)$, 

The above results motivate several further questions. We list below
a few that, to our knowledge, are completely open.

\subsection{Cubic and Rhombic lattices.} 
The reason that we excluded cubic lattices in Theorem \ref{thm:satz2}
is that there exist no fundamental domains $F$ for the cubic lattices 
$\Z^3$, bcc or fcc such that $S(F)$ contains $P(\Gamma)$ ($\Gamma \in \{ 
\Z^3, \mbox{bcc}, \mbox{fcc} \}$) as a proper subgroup of finite index. 
This is just because there are no such groups $S(F) \subset O(3)$. 
There still may be fundamental domains which have more symmetries in 
the sense that $|S(F)|>|P(\Gamma)|$ (but the authors doubt it).

The reason that we excluded rhombic lattices in Theorem \ref{thm:satz1} 
is that we were not able to find a general construction for fundamental 
domains $F$ for any rhombic lattice $\Gamma$ such that $S(F)$ is larger 
than $P(\Gamma)$. One can construct such fundamental domains for several 
particular cases, but a
general construction seems hard to obtain. If one tries
to use the same idea as in the other non-obtuse cases---start with 
a packing of polygons of higher symmetry (octagon, dodecagon, square) 
and refine---one runs into problems because the packings are
in general not vertex-to-vertex from some point on. 

\subsection{Even more symmetry.} 
Are there fundamental domains $F$ with $[S(F):P(G)] >2$? We have 
found a few: In the case of oblique plane lattices the
rectangular fundamental $F$ might be a square. (This happens if
$x=z$ in Figure \ref{fig:ob}.) In this case we obtain $[S(F):P(G)] =4$. 
In the case of the triclinic primitive lattice there is always a cuboidal
fundamental domain with $[S(F):P(G)] =4$. In analogy to the oblique lattices
in the plane, this cuboid might be a square prism or
even a cube in some particular cases. This would yield $[S(F):P(G)] =8$
or $[S(F):P(G)] =24$, respectively. What is the maximal 
value of the index $[S(F):P(G)]$ in $\R^d$ ($d \ge 2$)? Is the
maximal index always obtained by the lattices with the smallest
point group?

\subsection{Higher Dimensions.} 
The results in the present paper have been obtained by considering 
all different classes of lattices with respect to their 
symmetry group. There are 5 such classes in $\R^2$,
14 such classes in $\R^3$, 64 such classes in $\R^4$,
189 such classes in $\R^5$ and 826 such classes in $\R^6$
\cite{bbnwz,eng,ph,oeis,sou}. At some point it seems
desirable to find more general arguments than case-by-case
considerations. However, it is very likely that in higher 
dimensions there are several lattices $\Gamma$ with fundamental 
domains $F$ such that $P(\Gamma)$ is a proper subgroup of $S(F)$. 

\subsection{Non-Euclidean spaces.} 
The constructions used in this paper work also in spherical
or hyperbolic spaces, using spherical or hyperbolic regular 
$n$-gons. The fact that these $n$-gons are not similar to
each other on different length scales does not matter. All we
need is that there are edge-to-edge packings by regular $n$-gons 
on different length scales.

\subsection{Fractal Dimension.} 
The fundamental domains $F_{\square}$ of the square lattice, $F_{\triangle}$ 
of the hexagonal lattice and those of the rectangular lattices
with incommensurate basis lengths are of fractal appearance.
It might be possible to compute the Hausdorff dimensions of
the boundaries of these fundamental domains, as well as other fractal
dimensions, like the box-counting dimension or the affinity
dimension \cite{fal}, see also \cite{sing} and references therein. 
The two latter dimensions are particularly easy to compute
if one finds an iterated function system (IFS) generating the
fractal under consideration, see \cite{sing}. Up to the knowledge 
of the authors, no IFS for $F_{\square}$ or $F_{\triangle}$ or
the fundamental domains of rectangular lattices are known yet.

\subsection{Alternative Constructions.} 
The constructions used in this paper can be altered in many
ways. For instance, there are other ways to partition the octagons,
dodecagons and squares into two regions of different colours 
than the one used in the proof of Theorem \ref{thm:satz1}. 
All that is required is to keep the mirror symmetry of the 
partition, and take care that no overlaps occur. One possibility
is just to interchange the colours in the polygons of mixed colour.

\section*{Acknowledgements}
D. Frettl\"oh is grateful to two of the referees for several valuable 
remarks that improved the text a lot. The research 
leading to these results has received funding from the European
Research Council under the European Union's Seventh Framework 
Programme (FP7/2007-2013) / ERC grant agreement no 247029. 
J.R.C.G.~Damasco is grateful to the University of the Philippines System
for financial support through its Faculty, REPS, and Administrative
Staff Development Program.


\begin{thebibliography}{ZZZZ}

\bibitem{bks}
M.\ Baake, R.\ Klitzing, M.\ Schlottmann:
Fractally shaped acceptance domains of quasiperiodic square-triangle 
tilings with dodecagonal symmetry,
{\em Physica A} {\bf 191} (1992) 554-558.

\bibitem{bbnwz}
H.\ Brown, R.\ B\"ulow, J.\ Neub\"user, H.\ Wondratschek, H.\ Zassenhaus:
{\em Crystallographic Groups of Four-Dimensional Space}, 
Wiley, New York (1978).

\bibitem{cgb}
H.\ Brugiel, J.H.\ Conway, C.\ Goodman-Strauss:
{\em Symmetry of Things}, A.K.\ Peters, Wellesley MA (2008).

\bibitem{coc}
E.\ Cockayne: 
Nonconnected atomic surfaces for quasicrystalline sphere packings,
{\em Phys.\ Rev.\ B} 49 (1994) 5896-5910.

\bibitem{cox1}
H.S.M.\ Coxeter: 
Discrete groups generated by reflections, 
{\em Ann.\ Math.} {\bf 35} (1934) 588-621.

\bibitem{cox2}
H.S.M.\ Coxeter: 
The complete enumeration of finite groups of the form 
$r_i^2=(r_ir_j)^{k_{ij}}=1$, 
{\em J.\ London Math.\ Soc.} {\bf 10} (1935) 21-25.

\bibitem{cox}
H.S.M.\ Coxeter:
{\em Introduction to Geometry}, Wiley, New York (1969).

\bibitem{elser}
V.\ Elser:
Exceptionally symmetric fundamental regions for the root lattices in 2D,
{\em Tagungsbericht Oberwolfach} 20 (2001).

\bibitem{eng}
P.\ Engel: Geometric crystallography, 
in: P.M.\ Gruber, J.M.\ Wills (editors): 
{\em Handbook of Convex Geometry}, Vol. B, 
North-Holland, Amsterdam (1993) pp.\ 989-1041.

\bibitem{fal}
K.J.\ Falconer: 
{\em Fractal Geometry. Mathematical Foundations and Applications},
John Wiley, Chichester (1990).

\bibitem{hw}
G.H.\ Hardy, E.M.\ Wright:
{\em An Introduction to the Theory of Numbers},
Oxford University Press (1975).

\bibitem{hum}
J.E.\ Humphreys: {\em Reflection Groups and Coxeter Groups}, 
Cambridge University Press (1990).

\bibitem{ph}
W.\ Plesken, W.\ Hanrath:
The lattices of six-dimensional space, 
{\em Math.\ Comp.} {\bf 43} (1984) 573-587.

\bibitem{schw}
R.L.E.\ Schwarzenberger:
Colour symmetry,
{\em Bull.\ London Math.\ Soc.} {\bf 16} (1984) 209-240.

\bibitem{sing}
B.\ Sing:
{\em Pisot Substitutions and Beyond}, 
PhD thesis, Univ.\ Bielefeld 2006,\\
{\tt http://nbn-resolving.de/urn/resolver.pl?urn=urn:nbn:de:hbz:361-11555}

\bibitem{oeis}
N.J.A.\ Sloane, S.\ Plouffe:
{\em The Encyclopedia of Integer Sequences},
{\tt http://oeis.org/A004030}.

\bibitem{sou}
B.\ Souvignier: Enantiomorphism of crystallographic groups in 
higher dimensions with results in dimensions up to 6,
{\em Acta Cryst. A} {\bf 59} (2003) 210-220.

\bibitem{wik}
Wikipedia: Crystal system,
{\tt http://en.wikipedia.org/wiki/Crystal\_system},
version of 6.\ May 2013.

\bibitem{wik2}
Wikipedia: Orbifold notation, 
{\tt http://en.wikipedia.org/wiki//Orbifold\_notation},
version of 7.\ May 2013.

\end{thebibliography}
\end{document}